\def\sce{\setcounter{equation}{0}}
\def\dist{\operatornamewithlimits{\mbox{dist}}}
\def\half{{\frac{1}{2}}}
\def\argmin{\operatornamewithlimits{\mbox{argmin}}}
\def\th{{\theta}}
\def\tto{\;{\lower 1pt \hbox{$\rightarrow$}}\kern -10pt
\hbox{\raise 2pt \hbox{$\rightarrow$}}\;}
\def\Hat{\widehat}
\def\Bar{\overline}
\def\eps{\epsilon}
\def\lam{\lam}
\def\ra{\rangle}
\def\la{\langle}
\def\ve{\varepsilon}
\def\B{I\!\!B}
\def\R{I\!\!R}
\def\ox{\bar{x}}
\def\oy{\bar{y}}
\def\oz{\bar{z}}
\def\ov{\bar{v}}
\def\ou{\bar{u}}
\def\op{\bar{p}}
\def\Hat{\widehat}
\def\hat{\Hat}
\def\gph{\mbox{\rm gph}\,}
\def\cl*co{\mbox{\rm cl}^*\mbox{\rm co}\,}
\def\cl{\mbox{\rm cl}\,}
\def\cl{\mbox{\rm cl}\,}
\def\dn{\downarrow}
\def\O{\Omega}
\def\ph{\varphi}
\def\emp{\emptyset}
\def\st{\stackrel}
\def\lm{\lambda}
\def\gg{\gamma}
\def\dd{\delta}
\def\Th{\Theta}
\def\hs7{\hspace*{7pt}}
\def\th{\theta}
\def\kk{\kappa}
\def\R{\mathbb{R}}
\def\rr{\mathbb{R}}
\def\lin{\mathop{\rm lin}}
\newtheorem{Theorem}{Theorem}[section]
\newtheorem{Proposition}[Theorem]{Proposition}
\newtheorem{Remark}[Theorem]{Remark}
\newtheorem{Lemma}[Theorem]{Lemma}
\newtheorem{Definition}[Theorem]{Definition}
\newtheorem{Example}[Theorem]{Example}
\begin{document}
\begin{center}
{\Large\textbf{Graphical Derivatives and Stability Analysis for Parameterized Equilibria with Conic Constraints}}\\[3ex]
B. S. MORDUKHOVICH\footnote{Department of Mathematics, Wayne State University, Detroit, Michigan, USA(boris@math.wayne.edu). Research of this author was partly supported by the National Science Foundation under grant DMS-1007132.}, J. V. OUTRATA\footnote{Institute of Information Theory and Automation, Czech Academy of Science, Prague, Czech Republic (outrata@utia.cas.cz) and Centre for Informatics and Applied Optimization, Federation University of Australia, Ballarat, Australia. Research of this author was partly supported by grant P201/12/0671 of the Grant Agency of the Czech Republic and the Australian Research Council under grant DP-110102011.} and H. RAM\'{I}REZ C.\footnote{Departamento de Ingenier\'{i}a Matem\'{a}tica y Centro de Modelamiento Matem\'{a}tico, Universidad de Chile, Santiago, Chile (hramirez@dim.uchile.cl). Research of this author was partly supported by FONDECYT Project~1110888 and BASAL Project Centro de Modelamiento Matem\'{a}tico, Universidad de Chile.}\\[2ex]
{\em Dedicated to Lionel Thibault in honor of his 65th birthday}\\[2ex]
\end{center}\small
\textbf{Abstract:} The paper concerns parameterized equilibria governed by generalized equations whose multivalued parts are modeled via regular
normals to nonconvex conic constraints. Our main goal is to derive a precise pointwise second-order formula for calculating the graphical derivative of the solution maps to such generalized equations that involves Lagrange multipliers of the corresponding KKT systems and critical cone directions. Then we apply the obtained formula to characterizing a Lipschitzian stability notion for the solution maps that is known as isolated calmness.\\[1ex]
{\bf Mathematics Subject Classification (2010):} primary 49J53, 49J52; secondary 90C31.\\[1ex]
{\bf Key words:} variational analysis and optimization, parameterized equilibria, conic constraints, sensitivity and stability analysis, solution maps, graphical derivatives, normal and tangent cones.

\section{Introduction}\sce

This paper pursues a twofold goal. The main attention is paid to developing generalized differential calculus of variational analysis to which Lionel Thibault made crucial, pioneering contributions. These aspects of our present study, being certainly of their own interest, are motivated by the subsequent application to characterizing the so-called isolated calmness property of stability analysis for parameterized equilibria represented as the solution map to the generalized equation (GE)
\begin{equation}\label{GE}
0\in f(x,y)+\widehat{N}_{\Gamma}(y)\;\mbox{ with }\;\Gamma:=g^{-1}(\Theta),
\end{equation}
which contains the regular normal cone $\widehat{N}_{\Gamma}$ (see Section~2 for this and the other major constructions of generalized differentiation employed in the paper) to the given, usually nonconvex set $\Gamma$. By the general results of variational analysis (see Section~5), achieving the latter goal requires the usage of the graphical derivative of the solution map $S:x\mapsto y$ defined by
\begin{equation}\label{sol}
S(x):=\big\{y\in\mathbb{R}^{m}\big|\;0\in f(x,y)+\widehat{N}_{\Gamma}(y)\big\}
\end{equation}
and its calculation in terms of the initial problem data of \eqref{GE} and the associated values computed at the reference solution point. This amounts to developing a calculus rule for the expression of the graphical derivative of the normal cone mapping $\hat{N}_{\Gamma}(\cdot)$.

It has been well recognized in variational analysis that developing calculus rules (even of the inclusion type) for nonrobust, tangentially generated graphical derivatives is a challenging issue. In fact, not much has been known in this direction; see, e.g., \cite{RW}. This significantly distinguishes tangentially generated derivative constructions from limiting normals and normally generated coderivatives, which--despite their intrinsic nonconvexity--enjoy comprehensive calculus rules based on variational/extremal principles of variational analysis; see the books \cite{m06,RW} and the references therein.

In this paper we focus on the special class of set-valued mappings/multifunctions $S\colon\R^n\tto\R^m$ given in \eqref{sol} and observe that such mappings accumulate certain first-order information about optimization and equilibrium problems via the regular  normal cone $\Hat N_\Gamma$ to the constraint set $\Gamma$. Therefore, generalized differentiation of $S$ leads us to a second-order object, and the desired formula for the graphical derivative of this multifunction can be treated as a result of {\em second-order calculus}.

Some results on generalized differentiation of set-valued mappings of type \eqref{sol} are available in the literature. Namely, the paper \cite{OR} contains the calculation of the limiting coderivative of the solution map to a counterpart of GE \eqref{GE}, where $\Theta$ is a Carthesian product of the Lorentz cones. Our recent paper \cite{MOR} provides a precise second-order formula to calculate the regular coderivative of the solution map $S$ given in \eqref{sol} under natural assumptions. Furthermore, the same paper \cite{MOR} contains a formula for calculating the graphical derivative of \eqref{sol} but only under the convexity assumption on $\Gamma$, which is rather restrictive, being however unavoidable in the technique of \cite{MOR}. Observe also that the convexity assumption on $\Gamma$ is not imposed in \cite{Kr} while the set $\Theta$ in \eqref{sol} is assumed to be a convex polyhedron. This excludes from consideration many important classes in conic programming, e.g., second-order cone programs and semidefinite programs, which are among the main motivations for our current research.

In this paper we are able to completely avoid the convexity assumptions on $\Gamma$ and significantly relax the polyhedrality assumption on $\Theta$. The key ingredients allowing us to proceed in this way are the usage of the recent characterizations of {\em full stability} of local minimizers in problems of conic programming \cite{mnr} and the projection representation for nonconvex {\em prox-regular} sets taken from \cite{PRT}. Furthermore, an important role in our device is played by a new local geometric condition on the underlying set $\Th$ in the conic constraint $g(y)\in\Th$, which is labeled as the {\em projection derivation condition} (PDC) and which holds under the (second-order) extended polyhedricity condition from \cite{BSbook} and therefore also under  the stronger polyhedricity and polyhedrality properties of convex sets.\vspace*{0.05in}

The rest of the paper is organized as follows. In Section~2 we state the problem, introduce and discuss the standing assumptions, and recall the notions of first-order and second-order generalized differentiation widely used in the formulations and proofs of the subsequent results in the paper.

Section~3 is mainly devoted to the new results on the directional differentiability of the projection operator $P_\Gamma$ associated with the constraint set $\Gamma$ in \eqref{GE}. We prove here the directional differentiability of $P_\Gamma$ and establish a precise representation of the directional derivative $P'_\Gamma(u;h)$ via the directional derivative $P'_\Th$ without imposing the convexity assumption on $\Gamma$ and/or the projection derivation condition (and hence any polyhedricity-like assumption) on $\Th$. The aforementioned characterizations of full stability in conic programming play a crucial role in this section.

In Section~4 we formally introduce and discuss the aforementioned PDC property for $\Th$ that is crucial for the subsequent calculation of the graphical derivative of the solution map and its application to isolated calmness. In particular, relationships between the new PDC and the polyhedricity and extended polyhedricity conditions on $\Th$ are established and illustrated in this section.

Section~5 contains the main results of the paper providing second-order formulas for calculating the graphical derivative of the regular normal cone mapping $\Hat N_\Gamma$  and then of the solution map $S$ from \eqref{sol} in terms of Lagrange multipliers of the perturbed KKT system and the critical cone of $\Th$ under the projection derivation condition imposed on $\Th$ at the reference solution point.

Section~6 is devoted to the application of the graphical derivative  formulas and other calculus results to deriving sufficient conditions as well as complete characterizations of the isolated calmness property of $S$ at $(\ox,\oy)$ in terms of the problem data. We illustrate the efficient usage of these conditions in the case of equilibrium systems governed by the nonpolyhedral second-order (Lorentz) cone in $\R^3$. In the concluding Section~7 we discuss some perspective topics for future research.\vspace*{0.05in}

Our notation is standard throughout the whole paper, except from special symbols defined in the places where they first appear. Recall that $\mathbb{R}^{n}$ is the $n$-dimensional Euclidean space, $I$ is the identity matrix, $\O^{\perp}$ signifies the orthogonal complement to the set $\O$, and $A^T$ stands for the matrix or vector transposition. We denote by $F\colon\R^n\tto\R^m$ a set-valued mapping, which takes values in the subsets of $\R^m$. This distinguishes set-valued mappings from vector-valued ones denoted by $f\colon\R^n\to\R^m$. In the latter case, the symbol $f^{\prime}(x;h)$ stands for the classical directional derivative of $f$ at the point $x\in\R^n$ in the direction $h\in\R^n$. As usual, $\B(x;r)$ denotes the closed ball centered at $x$ with radius $r>0$ while $\B$ signifies the closed unit ball of the space in question.

\section{Problem Formulation and Preliminaries}\sce

The major object of our analysis is the parameter-dependent {\em generalized equation}
\begin{equation} \label{eq-1}
0\in f(x,y)+\widehat{N}_{\Gamma}(y)
\end{equation}
in Robinson's formalism \cite{rob}, which has been well recognized as a convenient model to study various problems of optimization and equilibria. In \eqref{eq-1} we have: $x\in\mathbb{R}^{n}$ is the {\em parameter}, $y\in\mathbb{R}^{m}$ is the {\em decision variable}, the mapping $f:\mathbb{R}^{n}\times\mathbb{R}^{m}\rightarrow\mathbb{R}^{m}$ is continuously differentiable, and $\widehat{N}_{\Gamma}(y)$ stands for the (Fr\'echet) {\em regular normal cone} to the set $\Gamma\subset\R^m$ at the point $y\in\Gamma$ defined by
\begin{eqnarray}\label{rnc}
\Hat N_\Gamma(y):=\Big\{v\in\R^m\Big|\;\limsup_{u\st{\Gamma}{\to}y}\frac{\la v,u-y\ra}{\|u-y\|}\le 0\Big\},
\end{eqnarray}
where the symbol $u\st{\Gamma}{\to}y$ indicates that $u\to y$ with $u\in\Gamma$. In what follows we address the GE model \eqref{eq-1} with $\Gamma$ described by the {\em conic constraint}
\begin{equation}\label{eq-2}
\Gamma=g^{-1}(\Theta)\Longleftrightarrow g(y)\in\Th,
\end{equation}
where $g:\mathbb{R}^{m}\rightarrow\mathbb{R}^{l}$ is twice continuously differentiable and $\Theta\subset\mathbb{R}^{l}$ is a closed convex cone. We associate with (\ref{eq-1}) the parameter-dependent {\em solution map} $S:\mathbb{R}^{n}\rightrightarrows\mathbb{R}^{m}$ defined by
\begin{equation}\label{eq:solution-map}
S(x):=\big\{y\in\mathbb{R}^{m}\big|\;0\in f(x,y)+\widehat{N}_{\Gamma}(y)\big\},\quad x\in\mathbb{R}^{n}.
\end{equation}
As mentioned in Section~1, the twofold goal of this paper is to derive a verifiable formula for calculating the graphical derivative of the solution map  $S$ from \eqref{eq:solution-map} and apply it to characterizing the isolated calmness property of $S$ at the reference point of its graph.

Given an arbitrary set-valued mapping $F\colon\R^n\tto\R^m$ and the point $(\ox,\oy)$ from its graph
$$
\gph F:=\big\{(x,y)\in\R^n\times\R^m\big|\;y\in F(x)\big\},
$$
the {\em graphical derivative} of $F$ at $(\ox,\oy)$ is the mapping $DF(\ox,\oy)\colon\R^n\tto\R^m$ defined by
\begin{eqnarray}\label{der}
D F(\ox,\oy)(u):=\big\{v\in\R^m\big|\;(u,v)\in T_{{\rm\small gph}\,F}(\ox,\oy)\big\},\quad u\in\R^n,
\end{eqnarray}
where the {\em tangent/contingent cone} to a set $\O\subset\R^s$ at a point $\oz\in\O$ is given by
\begin{equation}\label{tan}
T_\Omega(\bar z):=\big\{z\in\R^s\big|\;\exists\,t_k\downarrow 0,\;z_k\to z\;\mbox{ as }\;k\to\infty\;\mbox{ with }\;\bar z+t_k z_k\in\Omega\big\}.
\end{equation}
We refer the reader to \cite{RW} for more information on these constructions. Let us mention here that the (convex) regular normal cone \eqref{rnc} to $\O$ at $\oz$ is dual/polar to the tangent cone \eqref{tan}, i.e.,
\begin{eqnarray*}
\Hat N_{\Omega}(\bar z)=T_{\Omega}(\bar y)^{*}:=\big\{w\in\R^s\big|\;\langle w,z\rangle \le 0\;\mbox{ for all }\;z\in T_{\Omega}(\bar z)\big\}
\end{eqnarray*}
while not vice versa, since the tangent cone \eqref{tan} is generally nonconvex.\vspace*{0.05in}

Next we formulate our standing assumptions in this paper, which are standard in conic programming; see, e.g., the book \cite{BSbook} and the references therein.\\

{\bf Standing assumptions:}

\begin{description}
\item {\bf (A1)} The set $\Theta$ is ${\cal C}^{2}$-{\em reducible} to a closed convex set $\Xi\subset \mathbb{R}^{q}$ at $\bar z:=g(\bar{y})$, and
the reduction is {\em pointed}. This means that there exist a neighborhood $V$ of $\bar z$ and a ${\cal C}^{2}$-smooth mapping $h:V\to\mathbb{R}^{q}$ such that: {\bf(i)} for all $z\in V$ we have $z\in\Theta$ if and only if $h(z)\in\Xi$, where the cone $T_\Xi(h(\bar{z}))$ is pointed; {\bf(ii)} $h(\bar z)=0$ and the derivative mapping $\nabla h(\bar z):\mathbb{R}^{l}\to\mathbb{R}^{q}$ is {\em surjective/onto}, i.e., the Jacobian matrix $\nabla h(\bar z)$ has full rank.

\item {\bf (A2)} The point $\bar{y}\in\mathbb R^m$ is {\em nondegenerate} for $g$ with respect to $\Theta$, i.e.,
\begin{eqnarray*}
\nabla g(\bar y)\R^m+\lin\big(T_\Theta(\bar z)\big)=\R^l,
\end{eqnarray*}
where $\lin(Q)$ denotes the largest linear subspace of $\mathbb R^l$ contained in $Q\subset\mathbb R^l$.

\item {\bf (A3)} The metric projection operator onto $\Theta$, denoted by $P_{\Theta}$, is {\em directionally differentiable} on $\R^l$.
\end{description}

It occurs that assumption {\bf (A3)} holds automatically for a large class of sets typically encountered in conic programming. To describe such sets $\O\subset\R^s$, fix $\bar z\in\Omega$ and $h\in T_\Omega(\bar z)$. Recall that
\begin{eqnarray}\label{2tan}
T^{2}_{\Omega}(\bar z,h):=\Big\{w\in\;\R^s\Big|\;\dist\Big(\bar z+th+\half t^{2}w;\Omega\Big)=o(t^{2})\;\mbox{ for all }\;t>0\Big\}
\end{eqnarray}
is known as the (inner) {\em second-order tangent set} of $\Omega$ at $\bar z$ in the direction $h$. According to \cite{BCS98}, the set $\O$ is {\em second-order regular at} $\oz\in\O$ if for every sequence $z_k\to\oz$ in the form $z_k=\oz+t_kh+\frac{1}{2}t^2_k r_k$ with $t_k\dn 0$ and $t_k r_k\to 0$ as $k\to\infty$ it follows that
\begin{eqnarray}\label{2reg}
\lim_{k\to\infty}\big[\dist\big(r_k;T^2_\O(\ox,h)\big)\big]=0.
\end{eqnarray}
The set $\O$ is said to be {\em second-order regular} if it is second-order regular at every point $\oz\in\O$.

We refer the reader to \cite{BCS98}, \cite[Section~3.3.3]{BSbook}, and the recent paper \cite{sh} for various useful properties of second-order regular sets, which cover a large territory in second-order variational analysis and optimization. In particular, if $\O$ is second-order regular at $\oz$, then the inner second-order tangent set \eqref{2tan} agrees with its outer counterpart (which is not employed in this paper) and also $T^2_\O(\oz;h)\ne\emp$ for any $h\in T_\O(\oz)$. Among sufficient conditions for second-order regularity we mention the validity of this property at $\oz\in\O$ for any convex set $\O$ that is {\em cone reducible} at $\oz$, i.e., the set $\Xi$ in {\bf(A1)} is a pointed cone. The latter property holds, at any $\oz\in\O$, for many important classes of sets in conic programming, e.g., for convex polyhedra, for the cone of symmetric positive-semidefinite matrices in semidefinite programming, and for the {\em second-order/Lorentz/ice-cream cone} given by
\begin{eqnarray}\label{ic}
{Q}^l:=\big\{(\th_1,\ldots,\th_l)\in\R^l\big|\;\th_1\ge\|(\th_2,\ldots,\th_l)\|\big\}
\end{eqnarray}
with the Euclidean norm $\|\cdot\|$ that describes problems of second-order cone programming.

The principal result of \cite[Theorem~7.2]{BCS98} says the following: Given a closed and convex set $\O\subset\R^s$, its single-valued metric projection $P_\O\colon\R^s\to\O$, and $\oz=P_\O(\oy)$ with $\oy\in\R^s$, the second-order regularity of $\O$ at $\oz$ ensures the directional differentiability of $P_\O$ at $\oy$. This shows that  assumption {\bf (A3)} holds automatically for second-order regular sets $\Th$ in the conic constraint \eqref{eq-2}.

Finally in this section, we recall the notions of the (Mordukhovich) limiting normal cone and coderivative used in the proofs of our main results; see \cite{m06,RW} for more details and references on these constructions. Given a set $\O\subset\R^s$, the {\em limiting normal cone} to $\O$ at $\oz\in\O$ is defined by
\begin{eqnarray}\label{nc}
N_\O(\oz):=\big\{v\in\R^s\big|\,\exists z_k\to\oz,\;v_k\to v\;\mbox{ with }\;z_k\in\O,\;v_k\in\Hat N_\O(z_k)\big\}.
\end{eqnarray}
Given a mapping $F\colon\R^n\tto\R^m$ and a point $(\ox,\oy)\in\gph F$, the {\em limiting coderivative} of $F$ at $(\ox,\oy)$ is the set-valued mapping $D^*F(\ox,\oy)\colon\R^m\tto\R^n$ defined by using the normal cone \eqref{nc} as
\begin{eqnarray}\label{cod}
D^*F(\ox,\oy)(v):=\big\{u\in\R^n\big|\;(u,-v)\in N_{{\rm\small gph}\,F}(\ox,\oy)\big\},\quad v\in\R^m.
\end{eqnarray}
These constructions and their second-order combinations allow us to characterize the fundamental notion of full stability of local minimizers in conic programs employed in what follows.

\section{Directional Derivatives of Projection Operators}\sce

The main goal of this section is to establish relationships between the directional derivatives of the projection operator $P_\Gamma$ onto the conic constraint set $\Gamma$ from \eqref{eq-2} and the projection operator $P_\Th$ onto the underlying cone $\Th$. To proceed, consider first the auxiliary {\em linear GE}
\begin{equation} \label{eq-100}
0\in y-u+\widehat{N}_{\Gamma}(y),\quad y\in\Gamma,\;u\in\R^m,
\end{equation}
and associate with (\ref{eq-100}) the canonically perturbed {\em Karush-Kuhn-Tucker} (KKT) system
\begin{equation}\label{eq-101}
\begin{array}{ll}
u=y+\big(\nabla g(y)\big)^{T}\nu\\
s\in-g(y)+N_{\Theta^{*}}(\nu),
\end{array}
\end{equation}
where $\nu\in\R^l$ is the corresponding {\em Lagrange multiplier}. Denote by ${\cal T}$ the mapping $(u,s)\mapsto(y,\nu)$ defined by (\ref{eq-101}) and pick a  vector $\bar{u}$ such that $\bar{y}\in P_{\Gamma}(\bar{u})$. Note that under the posed standing assumptions there  is a unique Lagrange multiplier $\bar{\nu}\in\R_l$ such that
\begin{equation}\label{eq-99}
(\bar{y},\bar{\nu})\in{\cal T}(\bar{u},0).
\end{equation}

The next proposition, which is of its own interest, plays an important role in deriving the major results of this paper. Its proof is based on the recent second-order characterizations of the fundamental notion of full stability in optimization introduced in \cite{lpr}.

Recall this notion adapted to the case of conic programs considered in what follows:
\begin{eqnarray}\label{cp}
\mbox{minimize }\;\ph(y,\op)\;\mbox{ subject to }\;q(y,\op)\in\Th,
\end{eqnarray}
where the cost function $\ph\colon\R^m\times\R^d\to\R$ and the constraint mapping $q\colon\R^m\times\R^d\to\R^l$ are ${\cal C}^2$-smooth  around the reference pair of $(\oy,\op)$ of the solution vector $y\in\R^m$ and the nominal value of the basic parameter $p\in\R^d$. Consider now the perturbed version ${\cal P}(u,p)$ of \eqref{cp} involving also another (tilt) parameter $u\in\R^m$ and given in the form:
\begin{eqnarray}\label{per-cp}
\mbox{minimize }\;\psi(y,p)-\la u,y\ra\;\mbox{ with }\;\psi(y,p):=\ph(y,u)+\dd_\Th\big(q(y,p)\big),\;(y,p)\in\R^m\times\R^d,
\end{eqnarray}
where $\dd_\Th$ stands for the indicator function of the set $\Th$. Fix $\gg>0$ and define the local value function and solution map for the parametric problem ${\cal P}(u,p)$ in \eqref{per-cp} by, respectively,
\begin{eqnarray*}
\begin{array}{ll}
m_\gg(u,p):=\inf\big\{\psi(y,p)-\la u,y\ra\big|\;\|y-\oy\|\le\gg\big\},\\\\
M_\gg(u,p):=\mbox{argmin}\big\{\psi(y,p)-\la u,y\ra\big|\;\|y-\oy\|\le\gg\big\}.
\end{array}
\end{eqnarray*}
We say that $\ox$ is a (Lipschitzian) {\em fully stable local minimizer} of ${\cal P}(\ou,\op)$ if there exist positive numbers $\gg,\kk$ and a neighborhood $U\times V$ of $(\ou,\op)$ such that the mapping $(u,p)\mapsto M_\gg(u,p)$ is single-valued on $U\times V$ with $M_\gg(\ou,\op)=\oy$ satisfying the Lipschitz condition
$$
\|M_\gg(u_1,p_1)-M_\gg(u_2,p_2)\|\le\kk\big(\|u_1-u_2\|+\|p_1-p_2\|\big)\;\mbox{ for all }\;u_1,u_2\in U,\;p_1.p_2\in V
$$
and the value function $(u,p)\mapsto m_\gg(u,p)$ is also Lipschitz continuous around $(\ou,\op)$.

This notion has been recognized as an important stability concept in optimization and has been completely characterized via various second-order conditions. We refer the reader to \cite{lpr} and the recent papers \cite{mn,mnr,mos,Full,ms} for such characterizations and their applications to broad classes of optimization and control problems.\vspace*{0.05in}

Now we are ready to formulate and prove the aforementioned proposition important in what follows. The second-order condition in its first part is expressed in terms of the coderivative \eqref{cod} of the normal cone mapping $N_\Th$ generated by the cone $\Th$ from the conic constraint \eqref{eq-2}.

\begin{Proposition}[\bf single-valued Lipschitzian localization of the KKT system]\label{prop1} Consider the triple $(\bar{u},\bar{y},\bar{\nu})$ satisfying \eqref{eq-99} via the KKT system \eqref{eq-101}. The following assertions hold:

{\bf (i)} Assume that for all $w\in\mathbb{R}^{m}\setminus\{0\}$ we have the second-order condition
\begin{eqnarray}\label{eq-102}
\left\langle w,(I+\sum^{l}_{i=1}\bar{\nu_{i}}\nabla^{2}g_{i}(\bar{y}))w\right\rangle+\big\langle\nabla g(\bar{y})w, D^{*}N_{\Theta}(g(\bar{y}),\bar{\nu})(\nabla g(\bar{y})w)\big\rangle>0.
\end{eqnarray}
Then the set-valued mapping ${\cal T}$ from \eqref{eq-99} admits a single-valued and Lipschitz continuous localization around the quadruple $(\bar{u},0,\bar{y},\bar{\nu})$.

{\bf (ii)} If $\ou=\oy$ in \eqref{eq-99}, then the conclusion in {\bf (i)} is valid without assuming \eqref{eq-102}.
\end{Proposition}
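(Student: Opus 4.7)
The plan is to recognize the KKT system \eqref{eq-101} as describing the first-order stationary points of a canonical parametric conic program and then invoke the second-order characterization of full stability established in \cite{mnr}. Specifically, view \eqref{eq-101} as the KKT conditions for the tilt-and-perturbation family
\[
\mbox{minimize }\;\tfrac{1}{2}\|y\|^{2}-\langle u,y\rangle\;\mbox{ subject to }\;g(y)+s\in\Theta,
\]
parametrized by the tilt $u\in\R^{m}$ and the basic perturbation $s\in\R^{l}$. Its Lagrangian Hessian in $y$ is precisely $I+\sum_{i=1}^{l}\nu_{i}\nabla^{2}g_{i}(y)$, which matches the matrix in the first summand of \eqref{eq-102}. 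Under the standing assumptions {\bf(A1)}--{\bf(A2)}, the ${\cal C}^{2}$-reducibility of $\Theta$ places the problem within the framework of \cite{mnr}, while nondegeneracy of $\bar y$ supplies the strict constraint qualification that yields uniqueness of Lagrange multipliers.

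For assertion {\bf(i)}, the plan is to invoke the pointwise second-order characterization of \cite{mnr}: condition \eqref{eq-102} is exactly the criterion for $\bar y$ to be a (Lipschitzian) fully stable local minimizer of the above program at $(\bar u,0)$. Combined with nondegeneracy, this characterization upgrades full stability of the primal variable to strong regularity of the KKT mapping in the sense of Robinson; equivalently, ${\cal T}$ admits a single-valued Lipschitz continuous localization around $(\bar u,0,\bar y,\bar\nu)$, which is the claim.

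For assertion {\bf(ii)}, the assumption $\bar u=\bar y$ forces $\bar y\in\Gamma$ and, via the first line of \eqref{eq-101}, $(\nabla g(\bar y))^{T}\bar\nu=0$. Since $\nu=0$ is always compatible with the KKT conditions in this situation (because $g(\bar y)\in\Theta=N_{\Theta^{*}}(0)$) and nondegeneracy {\bf(A2)} guarantees uniqueness of the multiplier, we must have $\bar\nu=0$. Substituting $\bar\nu=0$ into \eqref{eq-102}, the first summand collapses to $\|w\|^{2}>0$ for any $w\ne 0$, while the coderivative summand is nonnegative because $N_{\Theta}$ is maximally monotone as the normal-cone mapping of a closed convex set, and the limiting coderivative of any maximally monotone operator satisfies $\langle w^{*},w\rangle\ge 0$ for every $w^{*}\in D^{*}N_{\Theta}(g(\bar y),0)(w)$. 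Hence \eqref{eq-102} holds automatically, and assertion {\bf(i)} applies.

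The main obstacle will be aligning the precise statement of the full-stability theorem from \cite{mnr} with what is needed here, namely strong regularity of the full KKT map (not merely Lipschitzian behavior of the primal optimizer), and verifying that {\bf(A1)}--{\bf(A2)} imply the reducibility and constraint-qualification hypotheses under which that theorem operates. For part {\bf(ii)}, the one delicate point is the nonnegativity of the coderivative term, which is extracted from maximal monotonicity of $N_{\Theta}$ together with the standard positivity property of limiting coderivatives of monotone set-valued mappings.
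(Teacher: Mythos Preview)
Your proposal is correct and follows essentially the same route as the paper: both recast \eqref{eq-101} as the KKT system of the tilt-perturbed conic program $\min\{\tfrac{1}{2}\|y\|^{2}-\langle u,y\rangle\,:\,g(y)+s\in\Theta\}$, invoke the full-stability characterization from \cite{mnr} (equivalently, strong regularity under {\bf(A1)}--{\bf(A2)}) to obtain {\bf(i)}, and for {\bf(ii)} deduce $\bar\nu=0$ from $\bar u=\bar y$ and then use maximal monotonicity of $N_{\Theta}$ (via \cite{PR}) to ensure nonnegativity of the coderivative term so that \eqref{eq-102} holds automatically. Your justification of $\bar\nu=0$ via uniqueness of multipliers under nondegeneracy is in fact a slightly more explicit version of what the paper states tersely.
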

\proof To verify the conclusion of {\bf(i)}, which means Robinson's {\em strong regularity} \cite{Ro} of the generalized equation corresponding to (\ref{eq-101}),  we employ \cite[Theorem~5.6]{mnr} and deduce from the equivalence {\bf (i)}$\Longleftrightarrow${\bf (iii)} therein that, under our standing assumptions, the conclusion in {\bf (i)} amounts to saying that $\oy$ is a {\em fully stable} local minimizer corresponding to $(\ou,0)$ of the problem ${\cal P}(u,s)$ given by
\begin{eqnarray}\label{per-cp1}
\mbox{minimize }\;\frac{1}{2}\|y\|^2-\la u,y\ra\;\mbox{ subject to }\;g(y)+s\in\Th,
\end{eqnarray}
which is a specification of \eqref{per-cp} with $\ph(y,u)=\frac{1}{2}\|y\|^2$, $p=s\in\R^l$, and $q(y,p)=g(y)+s$. Then \cite[Theorem~5.6(iv)]{mnr} tells us that condition \eqref{eq-102} is a characterization of full stability of $\oy$ in the problem ${\cal P}(\ou,0)$ from \eqref{per-cp1} with the Lagrange multiplier $\bar\nu$. This verifies assertion {\bf (i)}.

To justify assertion {\bf (ii)}, it suffices to show that condition \eqref{eq-102} holds automatically if $\ou=\oy$. Indeed, condition \eqref{eq-102} means that for any $v\in D^*N_\Th(g(\oy),\bar\nu)(\nabla g(\oy)w)$ with $w\ne 0$ we have
\begin{eqnarray}\label{eq-102a}
\|w\|^2+\Big\la w,\sum_{i=1}^l\bar\nu_i\nabla^2g_i(\oy)w\Big\ra+\big\la\nabla g(\oy)w,v\big\ra>0.
\end{eqnarray}
Since $\ou=\oy$, it follows from \eqref{eq-101} that $\bar\nu=0$, and thus the middle term in \eqref{eq-102a} disappears. Furthermore, the maximal monotonicity of the normal cone mapping implies by \cite[Theorem~2.1]{PR} that $\la\nabla g(\oy)w,v\ra\ge 0$. This shows that \eqref{eq-102a} holds, which completes the proof of the proposition.\endproof

\begin{Remark}[\bf on the second-order condition]\label{rem1} {\rm Condition \eqref{eq-102} can be treated as a proper extension of the classical {\em strong second-order sufficient condition} \cite{Ro} to which \eqref{eq-102} reduces in the case of $\Th=\R^l_-$, i.e., in the case of standard equality and inequality constraints as in nonlinear programming. We refer the reader to \cite{mn,mnr,mos,Full,ms} for constructive versions of \eqref{eq-102} in other constraint systems. Note that \eqref{eq-102} is satisfied when $g$ is {\em $\Th$-convex}, i.e., the set
$$
\big\{(y,z)\in\R^m\times\R^l\big|\;g(y)-z\in\Th\big\}
$$
is convex. Indeed, the latter property is equivalent for ${\cal C}^2$-smooth mappings $g$ to the condition
$$
\big\la\nabla^2g(y)(h,h),\nu\big\ra\ge 0\;\mbox{ for all }\;\nu\in\Th^*\;\mbox{ and }\;y,h\in\R^m.
$$
Therefore the $\Th$-convexity of $g$ ensures that the matrix $\sum^{l}_{i=1}\bar\nu_{i}\nabla^2g_{i}(\bar{y})$ is positive semidefinite, and so \eqref{eq-102} holds by the discussion above; cf.\ also \cite{MOR}. On the other hand, it is not hard to construct simple examples showing that the violation of the second-order condition \eqref{eq-102} for nonconvex sets $\Gamma$ prevents the validity of the conclusion in Proposition~\ref{prop1}(i).}
\end{Remark}

However, it is rather surprising to see that we {\em do not need} to assume the second-order condition \eqref{eq-102} in the rest of the paper. In particular, justifying the main results of the paper presented below requires only the usage  of assertion (ii) in Proposition~\ref{prop1} without imposing any convexity assumption on the set $\Gamma$ and therefore the $\Th$-convexity of the mapping $g$ as in \cite{MOR}.

To proceed, we first recall the notion of prox-regularity \cite{RW} for closed sets in finite dimensions and an important property of projections onto such sets. A set $\O\subset\R^s$ is {\em prox-regular} at $\oz\in\O$ for $\ov\in N_\O(\oz)$ if $\O$ is locally closed around $\oz$ and there are numbers $\ve>0$ and $\rho\ge 0$ such that
\begin{eqnarray*}
\la v,u-z\ra\le\frac{1}{2}\|u-z\|^2\;\mbox{ for all }\;u\in\O\cap{\rm\mathbb{B}}(\oz;\ve),\,v\in N_\O(u),\,\|v-\ov\|<\ve,\,\|z-\oz\|<\ve.
\end{eqnarray*}
The set $\O$ is called prox-regular at $\oz$ if it is prox-regular at $\oz$ for any $\ov\in N_\O(\oz)$.

Besides the validity of this property for closed convex sets, it holds for a large variety of other ``nice" sets broadly encountered in problems of variational analysis and optimization; see. e.g., \cite{RW} and the references therein. In particular, the conic constraint set $\Gamma$ under consideration in this paper \eqref{eq-2} is prox-regular at $\oy\in\Gamma$ (even better--``strongly amenable") under the nondegeneracy assumption {\bf (A2)}; see \cite[Proposition~13.32 and Exercise~10.25]{RW}.

Now we invoke the following result borrowed from \cite{PRT}, which holds for our underlying set $\Gamma$.

\begin{Proposition}[\bf projection representation for prox-regular sets]\label{thm1}  Given $\O\subset\R^s$ and $r>0$, consider the truncation of the normal cone
\begin{eqnarray*}
N^{r}_{\O}(z):=\left\{\begin{array}{ll}
N_{\O}(z)\cap\mathbb{B}(z;r)&\mbox{if }\;z\in\O,\\
\emp&\mbox{otherwise.}
\end{array}\right.
\end{eqnarray*}
Assume that $\O$ is prox-regular at $\oz\in\O$. Then there exists a neighborhood ${\cal O}$ of $\oz$ on which the projection operator $P_\O$ is single-valued and Lipschitz continuous while admitting the representation
\begin{eqnarray}\label{rep1}
P_\O=(I+N^r_\O)^{-1}\;\mbox{ for some }\;r>0.
\end{eqnarray}
\end{Proposition}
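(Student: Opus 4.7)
My plan is to split the statement into two independent claims: \textbf{(a)} $P_\Omega$ is single-valued and Lipschitz continuous on a neighborhood $\mathcal{O}$ of $\bar z$; and \textbf{(b)} the representation $P_\Omega = (I + N^r_\Omega)^{-1}$ holds on $\mathcal{O}$ for a suitable $r>0$. Claim \textbf{(a)} is the classical Poliquin--Rockafellar theorem on projections onto prox-regular sets and I would invoke it directly; alternatively, it can be obtained by viewing the proximal map as a localized resolvent of the hypomonotone normal-cone operator and applying a standard contraction argument. For claim \textbf{(b)} I would establish the two graph inclusions separately, restricting to $\mathcal{O}$ throughout.

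For the forward inclusion $\mathrm{gph}(P_\Omega|_\mathcal{O}) \subseteq \mathrm{gph}((I + N^r_\Omega)^{-1})$, given $u \in \mathcal{O}$ I would set $z := P_\Omega(u) \in \Omega$ and $v := u - z$. First-order optimality of $z$ as a minimizer of $\tfrac{1}{2}\|u - \cdot\|^2$ over $\Omega$, read off from \eqref{rnc}, yields $v \in \widehat{N}_\Omega(z) \subseteq N_\Omega(z)$. Since $P_\Omega(\bar z) = \bar z$ and $P_\Omega$ is Lipschitz on $\mathcal{O}$ with some constant $L$, one has $\|v\| = \|u - P_\Omega(u)\| \le (1+L)\|u - \bar z\|$, which can be driven below $r$ by shrinking $\mathcal{O}$ further. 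Hence $v \in N^r_\Omega(z)$ and $z \in (I + N^r_\Omega)^{-1}(u)$.

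For the reverse inclusion I would take $z \in \Omega$ close to $\bar z$ and $v \in N_\Omega(z)$ with $\|v\| \le r$, set $u := z + v$, and aim to conclude $z = P_\Omega(u)$. With $r$ chosen below the prox-regularity radius $\varepsilon$ for the base normal $\bar v = 0$, the prox-regularity inequality applied at the base point $z$ against test points $u' \in \Omega$ near $\bar z$ yields a bound of the form $\langle v, u' - z\rangle \le \tfrac{1}{2}\|u' - z\|^2$. A direct expansion then gives
\[
\|u - u'\|^2 - \|u - z\|^2 \;=\; \|u' - z\|^2 - 2\langle v, u' - z\rangle \;\ge\; 0,
\]
so $\|u - u'\| \ge \|u - z\|$ for all $u' \in \Omega$ near $\bar z$. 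Competitors $u' \in \Omega$ far from $\bar z$ are ruled out by the triangle inequality, using that $\|u - \bar z\|$ is small whereas $\|u - u'\|$ is bounded below. Thus $z \in P_\Omega(u)$, and single-valuedness from part \textbf{(a)} upgrades this to $z = P_\Omega(u)$.

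The principal obstacle will be the joint calibration of the truncation radius $r$, the neighborhood $\mathcal{O}$, and the intrinsic prox-regularity parameters $(\varepsilon,\rho)$: the constant on the right-hand side of the prox-regularity inequality is not free, so $r$ must be kept small and a cone-rescaling of the test normal may be needed to force the effective coefficient in the key estimate below one. Once these constants are matched and the local-to-global projection step is handled by the triangle-inequality bookkeeping above, the representation \eqref{rep1} follows.
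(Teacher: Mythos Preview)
The paper does not supply a proof of this proposition at all: it is quoted verbatim as a known result ``borrowed from \cite{PRT}'' (Poliquin--Rockafellar--Thibault), so there is no argument to compare against. Your plan is a faithful reconstruction of how this result is actually established in that source and in \cite[Chapter~13]{RW}: part~(a) is exactly the Poliquin--Rockafellar projection theorem, and your two graph inclusions for part~(b) are the standard route. The forward inclusion is clean as you wrote it. For the reverse inclusion, the issue you flag---that the prox-regularity constant $\rho$ need not be $\le 1$---is real, and your suggested fix via cone rescaling is the right one: since $N_\Omega(z)$ is a cone, applying the prox-regularity inequality to $\rho v$ (legitimate once $r<\varepsilon/\rho$) divides the right-hand side by $\rho$ and delivers exactly $\langle v,u'-z\rangle\le\tfrac{1}{2}\|u'-z\|^2$. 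One small point you should make explicit: for $u\in\mathcal{O}$ and $z\in(I+N^r_\Omega)^{-1}(u)$ you need $z$ close enough to $\bar z$ for prox-regularity to bite, but this is automatic from $\|z-\bar z\|\le\|v\|+\|u-\bar z\|\le r+\mathrm{diam}\,\mathcal{O}$, so shrinking $r$ and $\mathcal{O}$ simultaneously handles it. With these calibrations your argument goes through; the paper simply outsources all of this to the citation.
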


Now we have all the ingredients allowing us to derive a precise second-order relationship between the directional derivatives of the projection operators $P_\Th$ and $P_\Gamma$ under the standing assumptions made. This result does not impose any other assumptions on $g$ and $\Th$ and fully eliminates the $\Th$-convexity of $g$ imposed in \cite{MOR}.

\begin{Theorem}[\bf directional differentiability of projections to conic constraints]\label{prop2} Let the standing assumptions be satisfied at some $\oy\in\Gamma$, and put $\ou=\oy $. Then there is a neighborhood $\mathcal{U}$ of $\bar{u}$ such that the single-valued projection operator $P_{\Gamma}$ onto $\Gamma$ is directionally differentiable at each $u\in\mathcal{U}$ in every direction $h\in\mathbb{R}^{m}$ and its directional derivative is calculated by $P'_{\Gamma}(u;h)=v_{1}$, where $v_1$ is the first component of the unique solution $v=(v_{1},v_{2})\in\mathbb{R}^{m}\times\mathbb{R}^{l}$ to the system of equations
\begin{equation}\label{eq-105}
\begin{array}{ll}
h=\big(I+\sum\limits^{l}_{i=1}\nu_{i}\nabla^{2}g_{i}(y)\big)v_{1}+(\nabla g(y))^Tv_{2},\\
0=\nabla g(y)v_{1}-P^{\prime}_{\Theta}\big(g(y)+\nu;\nabla g(y)v_{1}+v_{2}\big)
\end{array}
\end{equation}
with $y=P_{\Gamma}(u)$ and $\nu=(\nu_1,\ldots,\nu_l)\in\mathbb{R}^{l}$ being the unique Lagrange multiplier corresponding to the pair $(u,y)$ in the KKT system \eqref{eq-101} with $s=0$.
\end{Theorem}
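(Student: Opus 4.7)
My plan is to reduce the theorem to a passage to the limit in the KKT system \eqref{eq-101} along the perturbation $u\mapsto u+th$, and then to verify that the resulting system \eqref{eq-105} has a unique solution. The setup step combines Proposition~\ref{prop1}(ii)---applicable because $\bar\nu=0$ at $\ou=\oy$---with Proposition~\ref{thm1} and the nondegeneracy assumption {\bf(A2)} to produce a neighborhood $\mathcal{U}$ of $\ou$ on which $P_\Gamma$ is single-valued and Lipschitz continuous, with each $y(u):=P_\Gamma(u)$ carrying a unique Lagrange multiplier $\nu(u)$ satisfying \eqref{eq-101} with $s=0$, and with the map $u\mapsto(y(u),\nu(u))$ Lipschitz continuous. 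I would shrink $\mathcal{U}$ if necessary so that $\|\nu(u)\|$ stays small enough for $I+\sum_i\nu_i(u)\nabla^{2}g_i(y(u))$ to be uniformly positive definite over $\mathcal{U}$.

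Now fix $u\in\mathcal{U}$ with its $(y,\nu)$, pick $h\in\R^m$, and for small $t>0$ set $y_t:=P_\Gamma(u+th)$ with corresponding multiplier $\nu_t$. Lipschitzness of $u\mapsto(y(u),\nu(u))$ gives $\|y_t-y\|+\|\nu_t-\nu\|\le L\,t\|h\|$, so the difference quotients $(y_t-y)/t$ and $(\nu_t-\nu)/t$ are bounded. Along any $t_k\dn 0$, extract a subsequence (not relabeled) with $(y_{t_k}-y)/t_k\to v_1$ and $(\nu_{t_k}-\nu)/t_k\to v_2$. Rewriting the multiplier inclusion in \eqref{eq-101} via the convex-cone identity $g(y)=P_\Theta(g(y)+\nu)$, the KKT system at $u+t_k h$ takes the form
$$
u+t_k h=y_{t_k}+\bigl(\nabla g(y_{t_k})\bigr)^{T}\nu_{t_k},\qquad g(y_{t_k})=P_\Theta\bigl(g(y_{t_k})+\nu_{t_k}\bigr).
$$
Subtracting the system at $u$, dividing by $t_k$, and passing to the limit---Taylor expansion of $\nabla g$ around $y$ produces $\sum_{i}\nu_i\nabla^{2}g_i(y)$ in the first equation, while the directional differentiability of $P_\Theta$ at $g(y)+\nu$ from assumption {\bf(A3)} produces $P'_\Theta(g(y)+\nu;\nabla g(y)v_1+v_2)$ in the second---yields exactly the system \eqref{eq-105}.

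The main obstacle is then uniqueness of solutions to \eqref{eq-105}, needed to upgrade subsequential convergence to convergence of the full limit. Given two solutions $(v_1,v_2)$ and $(v_1',v_2')$, set $\Delta_i:=v_i-v_i'$. Pairing the first-equation difference with $\Delta_1$ yields
$$
\Delta_{1}^{T}\Bigl(I+\textstyle\sum_{i}\nu_i\nabla^{2}g_i(y)\Bigr)\Delta_{1}+\bigl\langle\nabla g(y)\Delta_{1},\Delta_{2}\bigr\rangle=0.
$$
Firm nonexpansiveness of $P_\Theta$ transfers in the limit to its positively homogeneous directional derivative, giving $\langle P'_\Theta(z_0;w)-P'_\Theta(z_0;w'),w-w'\rangle\ge\|P'_\Theta(z_0;w)-P'_\Theta(z_0;w')\|^{2}$; substituting the second equation of \eqref{eq-105} with $z_0=g(y)+\nu$ then forces $\langle\nabla g(y)\Delta_{1},\Delta_{2}\rangle\ge 0$. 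Combined with the positive definiteness arranged in the setup, this forces $\Delta_{1}=0$, whence $(\nabla g(y))^{T}\Delta_{2}=0$ from the first equation.

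It remains to show $\Delta_2=0$. With $\Delta_1=0$, the second equation of \eqref{eq-105} reduces to the invariance $P'_\Theta(z_0;w)=P'_\Theta(z_0;w+\Delta_2)$ for $w=\nabla g(y)v_1+v_2$. Using the reduction from {\bf(A1)} and the standard formula for directional derivatives of projections onto cone-reducible sets, $P'_\Theta(z_0;\cdot)$ equals the projection onto the critical cone $K:=T_\Theta(g(y))\cap\nu^{\perp}$, so the invariance places $\Delta_2$ in $N_K(P_K(w))-N_K(P_K(w))\subset\mathrm{span}\,K^{\circ}\subset(\lin T_\Theta(g(y)))^{\perp}$, the last inclusion following from $K^{\circ}=\mathrm{cl}(N_\Theta(g(y))+\R\nu)$ and $\nu\in N_\Theta(g(y))\subset(\lin T_\Theta(g(y)))^{\perp}$. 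Combining this with $\Delta_2\in\ker(\nabla g(y))^{T}=(\nabla g(y)\R^{m})^{\perp}$ and the dual form of {\bf(A2)}---namely $(\nabla g(y)\R^{m})^{\perp}\cap(\lin T_\Theta(g(y)))^{\perp}=\{0\}$---delivers $\Delta_2=0$. Uniqueness in hand, every subsequential limit coincides with $(v_1,v_2)$, so $\lim_{t\dn 0}(y_t-y)/t$ exists and equals $v_1=P'_\Gamma(u;h)$ as claimed.
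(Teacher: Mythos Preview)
Your overall strategy---pass to the limit in the KKT system along $u\mapsto u+th$ via bounded difference quotients and then prove uniqueness of the limit system \eqref{eq-105}---is sound and genuinely different from the paper's route, which packages the KKT system into a single Lipschitz, directionally differentiable map $\Phi$ and invokes Kummer's inverse-function theorem to obtain both directional differentiability of $\varrho=\Phi^{-1}$ and uniqueness of $(v_1,v_2)$ in one stroke. Your argument for uniqueness of $v_1$ (positive definiteness of $I+\sum_i\nu_i\nabla^2 g_i(y)$ near $\bar\nu=0$ combined with firm nonexpansiveness transferred to $P'_\Theta(z_0;\cdot)$) is correct and already delivers $P'_\Gamma(u;h)=v_1$.

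The gap is in your uniqueness argument for $v_2$. You assert that ``$P'_\Theta(z_0;\cdot)$ equals the projection onto the critical cone $K$'' as a consequence of cone reducibility {\bf(A1)}. That identity is precisely the Projection Derivation Condition of Definition~\ref{pdc}, which is \emph{not} among the standing assumptions; the paper introduces it only in Section~4 and stresses that Theorem~\ref{prop2} does not require it. Under {\bf(A1)} alone the available formula is \eqref{eq:derivative-projection}, namely
\[
P'_\Theta(z_0;h)=\argmin_{d\in K}\Big\{\|d-h\|^2-\sigma\big(\nu;T^2_\Theta(g(y),d)\big)\Big\},
\]
and the curvature term $\sigma(\nu;T^2_\Theta(g(y),d))$ does not vanish in general (cf.\ Example~\ref{pdc-n}), so your chain $\Delta_2\in N_K(P_K(w))-N_K(P_K(w))\subset(\lin T_\Theta(g(y)))^{\perp}$ is unjustified. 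Without this step you have $(\nabla g(y))^T\Delta_2=0$ and $P'_\Theta(z_0;w)=P'_\Theta(z_0;w+\Delta_2)$, which by themselves do not force $\Delta_2=0$.

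The paper sidesteps this difficulty: Proposition~\ref{prop1}(ii) gives a Lipschitz local inverse $\varrho$ of $\Phi$, and then the Lipschitz-inverse property passes to the B-derivative, so $\Phi'\big((w,z,y,\nu);\cdot\big)$ is globally injective; Kummer's theorem turns this into uniqueness of $(v_1,v_2)$ without any structural formula for $P'_\Theta$. If you wish to keep your subsequence framework, the cleanest repair is to import exactly this injectivity: from $\|\Phi(a)-\Phi(b)\|\ge L^{-1}\|a-b\|$ (valid locally since $\varrho$ is Lipschitz) one gets $\|\Phi'(x;\xi)-\Phi'(x;\xi')\|\ge L^{-1}\|\xi-\xi'\|$ by rescaling and passing to the limit, which immediately yields uniqueness of the full $(v_1,v_2)$.
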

\proof
Observe that the local single-valuedness and Lipschitz continuity of the projection operator $P_\Gamma$ follows directly from Proposition~\ref{thm1} due to the prox-regularity of $\Gamma$ at $\oy$.  To proceed further, define the mapping $\Phi:\mathbb{R}^{m}\times\mathbb{R}^{m}\times\mathbb{R}^{m}\times\mathbb{R}^{l}\to\mathbb{R}^{m}\times\mathbb{R}^{m}\times\mathbb{R}^{m}
\times\mathbb{R}^{l}$ by
\begin{eqnarray}\label{Phi}
\Phi(w,z,y,\nu):=\left[\begin{array}{ll}
w\\
z\\
y-w+\big(\nabla g(y)\big)^{T}\nu\\
g(y)+z-P_{\Theta}\big(g(y)+z+\nu\big)
\end{array}\right],
\end{eqnarray}
which is  single-valued and Lipschitz continuous around $(\ou,0,\oy,\bar \nu)$.
Using \eqref{Phi} and the definition of ${\cal T}$ in \eqref{eq-99}, we clearly get
\begin{eqnarray}\label{eq-104}
(y,\nu)\in{\cal T}(u,s)\Longleftrightarrow\Phi(w,z,y,\nu)=\left[\begin{array}{ll}
u\\
s\\
0\\
0
\end{array}\right].
\end{eqnarray}
It follows from Proposition~\ref{prop1}(ii) that there exist a neighborhood $\mathcal{O}$ of $(\bar{u},0,\bar{y},\bar{\nu})$ and a single-valued locally Lipschitzian mapping $\varrho$ such that $\varrho(\bar{u},0,0,0)=(\bar{u},0,\bar{y},\bar{\nu})$ and
\[
\varrho(\cdot)=\Phi^{-1}(\cdot)\cap\mathcal{O}
\]
on a neighborhood of $(\bar{u},0,0,0)$. Furthermore, the inverse mapping theorem by Kummer \cite{KK,Ku} tells us that $\varrho$ is directionally differentiable on a neighborhood of $(\bar{u},0,0,0)$ and its directional derivative on this neighborhood satisfies the relationship
\begin{eqnarray}\label{Phi1}
\varrho^{\prime}\big(\Phi(w,z,y,\nu);(h,0,0,0)\big)=\left[\begin{array}{ll}
h\\
0\\
v_{1}\\
v_{2}
\end{array}\right]\;\mbox{ with }\;\left[\begin{array}{ll}
h\\
0\\
0\\
0
\end{array}\right]=\Phi'\big((w,z,y,\nu);(h,0,v_{1},v_{2})\big).
\end{eqnarray}

To justify now the claimed representation of $P'_\Gamma(u;h)$, pick any $h\in\R^m$ and $u$ near $\ou$ and then find $(y,\nu)$ sufficiently close to $(\oy,\bar\nu)$ such that $u\in y+\Hat N_\Gamma(y)$ with $y\in\Gamma$ and $\Hat N_\Gamma(y)=N_\Gamma(y)$ due to the aforementioned prox-regularity of $\Gamma$; see \cite{RW}. Since $\ou=\oy$, we can choose $y\in\Gamma$ so that $\|y-u\|$ is small enough, which yields $u\in y+N^r_\Gamma(y)$ for the truncated normal cone in Proposition~\ref{thm1}. Employing representation \eqref{rep1} ensures that $y\in(I+N^r_\Gamma)^{-1}(u)=P_{\Gamma}(u)$. This allows us to combine the relationships in \eqref{eq-105}, \eqref{eq-104}, and \eqref{Phi1} and to arrive in this way at the conclusion of the theorem by using standard calculus rules of calculating the directional derivative of $\Phi$ in \eqref{Phi}.\endproof

\section{Projection Derivation Condition}\sce

In this section we introduce and comprehensively discuss a new condition on the underlying convex cone $\Theta$ formulated in terms of its projection operator $P_\Th$. This condition, together with the result of Theorem~\ref{prop2}, plays a crucial role in the precise calculation of the projection $P_\Gamma$ to the conic constraint set $\Gamma$ via the initial data of \eqref{eq-2} and then in the subsequent results of this paper.

The aforementioned property can be formulated for general sets in finite or infinite dimensions while we investigate and apply it below only for the set $\Th\subset\R^l$ under consideration in \eqref{eq-2}.

Given vectors $\oz\in\Th$ and $b\in\R^l$, define the {\em critical cone} to $\Th$ at $\oz$ with respect to $b$ by
\begin{eqnarray}\label{cc}
\mathcal{K}(\oz,b):=T_{\Theta}(\oz)\cap\{b\}^{\perp}.
\end{eqnarray}
\begin{Definition}[\bf projection derivation condition]\label{pdc} The set $\Th$ satisfies the {\sc projection derivation condition} $($PDC$)$ at the point $\oz\in\Th$ if we have
\begin{eqnarray}\label{pdc1}
P^{\prime}_{\Theta}(\oz+b;h)=P_{\mathcal{K}(\oz,b)}(h)\;\mbox{ for all }\;b\in N_{\Theta}(\oz)\;\mbox{ and }\;h\in\R^l.
\end{eqnarray}
\end{Definition}

Let us discuss the class of convex sets $\Th$ satisfying the new condition from Definition~\ref{pdc}. It follows from \cite{Rob} the PDC \eqref{pdc1} holds at each $\bar z\in\Theta$ when $\Theta$ is a convex polyhedron. In fact it also holds for a significantly broader collection of sets satisfying the so-called ``extended polyhedricity condition" from \cite[Definition~3.52]{BSbook}. To recall this definition, for the fixed vectors $\oz\in\Th$ and $b\in N_{\Theta}(\bar z)$ we define the {\em second-order critical set}
\begin{eqnarray}\label{K2}
{\cal K}^2(\bar z,b):=\big\{h\in{\cal K}(\bar z,b)\big|\;0\in T^2_\Theta(\bar z,h)\big\}
\end{eqnarray}
and say that $\Th$ satisfies the {\em extended polyhedricity condition} at $\bar z$ if for any $b\in N_{\Theta}(\bar z)$ the second-order critical set \eqref{K2} is a dense subset of the critical cone ${\cal K}(\bar z,b)$.

The next proposition reveals the relationship between the projection derivation condition and the extended polyhedricity condition defined above.

\begin{Proposition}[\bf extended polyhedricity implies PDC]\label{poly-pdc} Let $\Theta$ be a closed convex set with $\bar z\in\Theta$, and let $\Theta$ be cone reducible at $\bar z$. Then the validity of the extended polyhedricity condition for $\Th$ at $\bar z$ implies that $\Theta$ satisfies the projection derivation condition at this point.
\end{Proposition}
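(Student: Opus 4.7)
My plan is to invoke the Bonnans--Shapiro second-order expansion of the metric projection onto a second-order regular convex set, and then to observe that extended polyhedricity forces the second-order correction term in that expansion to vanish on a dense subset of the critical cone.

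Since $\Theta$ is cone reducible at $\bar z$, the discussion in Section~2 gives that $\Theta$ is second-order regular at $\bar z$, so $P_\Theta$ is directionally differentiable at $\bar u := \bar z + b$ for every $b \in N_\Theta(\bar z)$ and every direction $h$, by \cite{BCS98}. The key analytical tool I would use is the Bonnans--Shapiro variational representation from \cite[Section~3.3.3]{BSbook},
\begin{equation*}
P'_\Theta(\bar z + b; h) \;=\; \argmin_{v \in \mathcal{K}(\bar z, b)} \Big\{\tfrac{1}{2}\|v - h\|^2 \;-\; \tfrac{1}{2}\sigma\big(b,\, T^2_\Theta(\bar z, v)\big)\Big\},
\end{equation*}
where $\sigma(b, Q) := \sup\{\langle b, w\rangle : w \in Q\}$ is the standard support function. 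This representation is precisely what second-order regularity at $\bar z$ delivers.

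Next I would analyze the support-function correction term along the critical cone. Given $v \in \mathcal{K}(\bar z, b)$ and $w \in T^2_\Theta(\bar z, v)$, the definition \eqref{2tan} of the inner second-order tangent set supplies sequences $t_k \downarrow 0$ and $w_k \to w$ with $\bar z + t_k v + \tfrac{1}{2} t_k^2 w_k \in \Theta$. Using convexity of $\Theta$, the normal cone inequality $\langle b, z - \bar z\rangle \le 0$ for $z \in \Theta$, and $\langle b, v\rangle = 0$, one readily deduces $\langle b, w\rangle \le 0$, so that $\sigma(b, T^2_\Theta(\bar z, v)) \le 0$ throughout $\mathcal{K}(\bar z, b)$. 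On the other hand, when $v$ lies in the second-order critical set $\mathcal{K}^2(\bar z, b)$ of \eqref{K2}, the inclusion $0 \in T^2_\Theta(\bar z, v)$ yields $\sigma(b, T^2_\Theta(\bar z, v)) \ge \langle b, 0\rangle = 0$, and combining the two bounds gives $\sigma(b, T^2_\Theta(\bar z, v)) = 0$ on $\mathcal{K}^2(\bar z, b)$.

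Combining these observations with the extended polyhedricity assumption, the Bonnans--Shapiro objective dominates $\tfrac{1}{2}\|v - h\|^2$ on all of $\mathcal{K}(\bar z, b)$, so its infimum is at least $\tfrac{1}{2}\|P_{\mathcal{K}(\bar z, b)}(h) - h\|^2$; conversely, restricting $v$ to the dense subset $\mathcal{K}^2(\bar z, b)$ reduces the objective to the pure quadratic $\tfrac{1}{2}\|v - h\|^2$, and density drives the infimum down to exactly $\tfrac{1}{2}\|P_{\mathcal{K}(\bar z, b)}(h) - h\|^2$. Any minimizer $v^* \in \mathcal{K}(\bar z, b)$ therefore satisfies $\|v^* - h\| \le \|P_{\mathcal{K}(\bar z, b)}(h) - h\|$, and the uniqueness of the metric projection onto the closed convex cone $\mathcal{K}(\bar z, b)$ forces $v^* = P_{\mathcal{K}(\bar z, b)}(h)$, which is exactly the PDC identity \eqref{pdc1}. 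The main obstacle I expect is the careful bookkeeping of the precise form and hypotheses of the Bonnans--Shapiro expansion: one must verify that cone reducibility at $\bar z$ supplies exactly the second-order regularity needed to produce the representation with the correct $-\tfrac{1}{2}\sigma(b, T^2_\Theta(\bar z, v))$ correction. Once that is in place, the density-plus-projection-uniqueness argument is essentially mechanical.
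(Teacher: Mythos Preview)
Your approach is correct and is essentially the same as the paper's own proof: both invoke the variational representation \eqref{eq:derivative-projection} for $P'_\Theta$ (which the paper cites from \cite{BCS98}), show that $\sigma(b;T^2_\Theta(\bar z,d))\le 0$ on $\mathcal{K}(\bar z,b)$ and $=0$ on $\mathcal{K}^2(\bar z,b)$, and then use the density of $\mathcal{K}^2(\bar z,b)$ together with the uniqueness of the metric projection to conclude. Your write-up is in fact slightly more explicit than the paper's in justifying the inequality $\sigma\le 0$ and in spelling out the final uniqueness step identifying the argmin with $P_{\mathcal{K}(\bar z,b)}(h)$.
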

\begin{proof} As already mentioned, the cone reducibility ensures by \cite[Theorem~7.2]{BCS98} that the projection operator $P_\Theta$ is directionally differentiable at $\bar z$. Moreover, it is proved therein (see also \cite{BSbook,sh} for further details) that for any $u\in\R^l$ with
$\bar z=P_{\Theta}(u)$ we have the representation
\begin{eqnarray}\label{eq:derivative-projection}
P'_{\Theta}(u;h)=\argmin\big\{\|d-h\|^2-\sigma\big(u-\bar z;T^{2}_{\Theta}(\bar z,d)\big)\big|\;d\in{\cal K}(\bar z,u-\bar z)\big\},
\end{eqnarray}
where $\sigma(\cdot;\O):=\sup_{w\in\O}\langle\cdot,w\rangle$ stands for the support function of the set in question. Picking
an arbitrary vector $b\in N_{\Theta}(\bar z)$ and denoting $u:=\bar z+b$, we deduce from the well-known equivalence
\begin{equation}\label{eq:normal-proyection}
b\in N_{\Theta}(\bar z)\Longleftrightarrow P_{\Theta}(\bar z+b)=\bar z
\end{equation}
that $\bar z=P_{\Theta}(u)$. We claim that $P^{\prime}_{\Theta}(u;h)=P_{\mathcal{K}(\bar z,u-\bar z)}(h)$ for all $h\in\mathbb{R}^{l}$ whenever $\Theta$ satisfies the extended polyhedricity condition at $\bar z$.

Indeed, for an arbitrary element $d\in{\cal K}(\bar z,b)$ it is easy to see that $\sigma(b;T^{2}_{\Theta}(\bar z,d))\le 0$. Hence
$$
\min\big\{\|d-h\|^2-\sigma\big(u-\bar z;T^{2}_{\Theta}(\bar z,d)\big)\big|\;d\in{\cal K}(\bar z,u-\bar z)\big\}\ge\min\big\{\|d-h\|^2\big|\;d\in{\cal K}(\bar z,u-\bar z)\big\}.
$$
Moreover, since $ {\cal K}^2(\bar z,u-\bar z)\subset{\cal K}(\bar z,u-\bar z)$ and $\sigma(b;T^{2}_{\Theta}(\bar z,d))=0$ for any $d\in{\cal K}^2(\bar z,b)$, we get
\begin{eqnarray*}
&&\min\big\{\|d-h\|^2-\sigma\big(u-\bar z;T^{2}_{\Theta}(\bar z,d)\big)\big|\;d\in{\cal K}(\bar z,u-\bar z)\big\}\\
&&\qquad\quad\le\min\big\{\|d-h\|^2-\sigma(u-\bar z;T^{2}_{\Theta}(\bar z,d)\big)\big|\;d\in{\cal K}^2(\bar z,u-\bar z)\big\}\\
&&\qquad\quad=\min\big\{\|d-h\|^2\big|\;d\in{\cal K}^2(\bar z,u-\bar z)\big\}.
\end{eqnarray*}
This allows us to arrive at the equality
$$
\min\big\{\|d-h\|^2-\sigma\big(u-\bar z;T^{2}_{\Theta}(\bar z,d)\big)\big|\;d\in{\cal K}(\bar z,u-\bar z)\big\}=\min\big\{\|d-h\|^2\big|\;d\in{\cal K}(\bar z,u-\bar z)\big\}
$$
provided that ${\cal K}^2(\bar z,u-\bar z)$ is a dense subset of ${\cal K}(\bar z,u-\bar z)$, which is a consequence of the extended polyhedricity condition. Thus our claim follows from formula \eqref{eq:derivative-projection}.

Since $b=u-\bar z$, it follows from the above claim that
\[
P^{\prime}_{\Theta}(\bar z+b; h)=P^{\prime}_{\Theta}(u;h)=P_{\mathcal{K}(\bar z,u -\bar z)}(h)=P_{\mathcal{K}(\bar z,b)}(h)
\]
under the extended polyhedricity condition for $\Th$ at $\bar z$. Remembering that $b\in N_{\Theta}(\bar z)$ was chosen arbitrarily allows us to conclude that the PDC holds for $\Th$ at $\bar z$ and thus to complete the proof.
\end{proof}

The obtained proposition shows that the PDC property holds, in particular, for {\em polyhedric} convex sets (see, e.g., \cite[Definition~3.51]{BSbook}), which constitute a broader class that the standard convex polyhedra in finite dimensions. The next example describes a heavily nonpolyhedral situation when we do not have even polyhedricity but the PDC property holds.

\begin{Example}[\bf PDC for nonpolyhedric sets]\label{pdc-n} {\rm Consider the closed and convex cone
\begin{eqnarray}\label{nonpol}
\Theta:=\big\{z\in\mathbb{R}^{3}\big|\;z=tq\;\mbox{ with }\;t\ge 0,\;q\in\Xi\big\}
\end{eqnarray}
generated by the nonconvex three-dimensional set
$$
\Xi=\big\{z=(z_{1},z_{2},z_{3})\big|\;z_{1}=1\;\mbox{ and }\;z_{2}^{4}\le z_{3}\le 1\big\},
$$
which is depicted on Figure~\ref{fig:Xi-example} together with normals to $\Th$ at the reference point.

\begin{figure}[h]
\begin{center}
\includegraphics[scale=0.4]{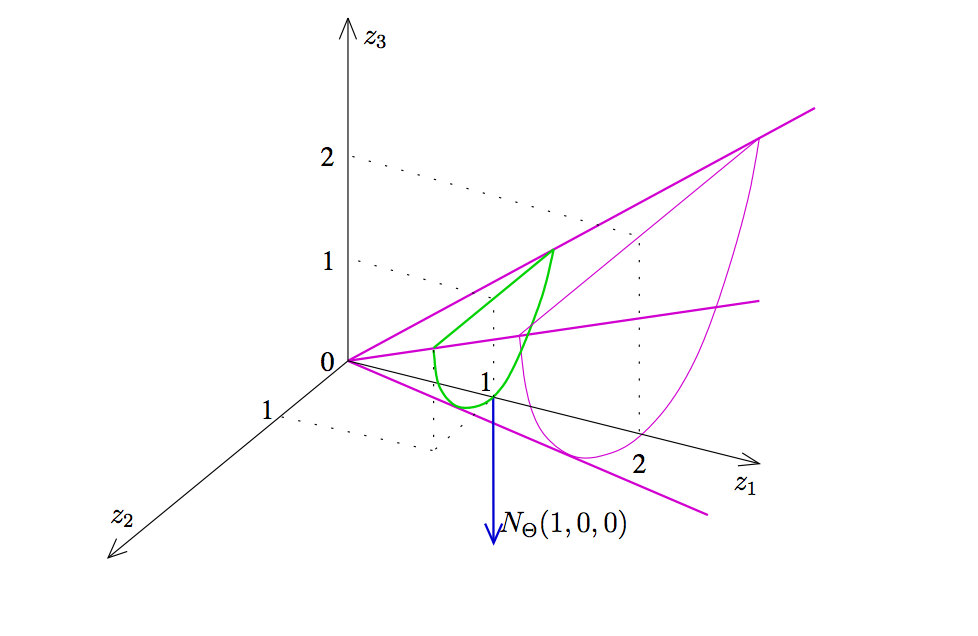}
\caption{Cone $\Theta$ from \eqref{nonpol}
}\label{fig:Xi-example}
\end{center}
\end{figure}

It is not hard to check that the set \eqref{nonpol} is nonpolyhedric at $\bar z=(1,0,0)$ and thus also nonpolyhedral. To show that $\Th$ has the PDC property at $\oz$, it suffices to check by Proposition~\ref{poly-pdc} that $\Theta$ satisfies the extended polyhedricity condition at this point. Since $\Theta$ is described around $\bar y$ by
\begin{eqnarray}\label{ph}
\varphi(z):=z_{2}^{4}/z_{1}^{3}-z_{3}\le 0,
\end{eqnarray}
it follows from \cite[Proposition~3.30]{BSbook} that for every $b\in N_{\Theta}(\bar z)$ and every $h\in{\cal K}(\bar z,b)$ the second-order tangent set $T^{2}_{\Theta}(\bar z,h)$ from \eqref{2tan} is given by
\begin{eqnarray}\label{T2}
T^2_\Theta(\bar z,h)=\big\{w\in\R^3\big|\;\varphi''(\bar z;h,w)\le 0\big\}
\end{eqnarray}
via the {\em parabolic second order directional derivative} of $\varphi$ defined by
\begin{eqnarray}\label{parab}
\varphi''(z;h,w):=\lim_{t\dn 0}\frac{\varphi(z+th+\half t^2w)-\varphi(z)-t\varphi'(z;h)}{\half t^2}.
\end{eqnarray}
Due to the twice continuous differentiability of $\ph$ from \eqref{ph} around $\oz$, we easily get
\begin{equation*}
\varphi''(z;h,w)=\nabla\varphi(z)w+\nabla^2\varphi(z)(h,h)
\end{equation*}
\begin{equation*}
=-3\frac{z_{2}^{4}}{z_{1}^{4}}w_{1}+4\frac{z_{2}^{3}}{z_{1}^{3}}w_{2}-w_{3}+12\left(\frac{z_{2}^{4}}{z_{1}^{5}}h_{1}^{2}-2\frac{z_{2}^{3}}{z_{1}^{4}} h_{1}h_{2}+\frac{z_{2}^{2}}{z_{1}^{3}}h_{2}^{2}\right).
\end{equation*}
Thus $N_{\Theta}(\bar z)=\{0\}\times\{0\}\times\mathbb{R}_{-}$ at $\bar z=(1,0,0)$ and then
\begin{eqnarray*}
{\cal K}(\bar z,b)=\left\{\begin{array}{ll}
\mathbb{R}^{2}\times\mathbb{R}_{+}&\mbox{if }\;b_{3}=0,\\
\mathbb{R}^{2}\times \{0\}&\mbox{if }\;b_{3}<0.
\end{array}\right.
\end{eqnarray*}
In both cases  formula \eqref{T2} leads us to the representation
\begin{eqnarray*}
T^2_\Th(\oz,h)=\big\{w\in\R^3\big|\;w_{3}\ge 0\big\}\;\mbox{ for any }\;h\in{\cal K}(\bar z,b),
\end{eqnarray*}
and so $0\in T^2_\Th(\oz,h)$. It shows that ${\cal K}^2(\bar z,b)={\cal K}(\bar z,b)$ for the second-order critical set in \eqref{K2}, and thus $\Theta$ satisfies the extended polyhedricity condition at $\bar z$.

Observe further that the latter property can be lost for the set $\Th$ from \eqref{nonpol} at other points, where the curvature of $\Th$ is larger.
Indeed, at $\bar z=(1,1,1)$ the same computations as above lead us to $N_{\Theta}(\bar z)=\mathbb{R}_{+}(-3,4,-1)$, and thus for any $b\ne 0$ we have
$$
{\cal K}(\bar z,b)=\big\{h\in\R^3\big|\;-3h_{1}+4h_{2}-h_{3}=0\big\}.
$$
It gives us for any $h\in{\cal K}(\bar z,b)$ the representation
$$
T^2_\Theta(\bar z,h)=\big\{ w\in\R^3\big|\;-3w_{1}+4w_{2}-w_{3}+12(h_{1}-h_{2})^{2}\le 0\big\}.
$$
Hence $0\in T^2_\Theta(\bar z,h)$ if and only if $h_{1}=h_{2}$. This shows that ${\cal K}^2(\bar z,b)$ is not a dense subset of
the critical cone ${\cal K}(\bar z,b)$, and so the extended polyhedricity property of $\Th$ is violated at this point.}\hfil$\triangle$
\end{Example}

Since the PDC property is local, it may hold in many important situations that have nothing to do with polyhedricity. The next proposition reveals one of them, which is used what follows.

\begin{Proposition}\label{Lemma:ext-polyhedricity-vertex}
Any closed and convex cone $\Th\subset\R^l$ satisfies PDC  at the vertex $\oz=0\in\Th$.
\end{Proposition}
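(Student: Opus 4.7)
The plan is to exploit the fact that at the vertex $\oz=0$ the closed convex cone $\Th$ coincides with its own tangent cone, and the normal cone reduces to the polar cone $\Th^\circ=\{b\in\R^l:\langle b,z\rangle\le 0\ \text{for all}\ z\in\Th\}$. By Moreau's decomposition, $P_\Th(b)=0$ for every $b\in N_\Th(0)=\Th^\circ$, so what we must compute is
\[
P'_\Th(b;h)=\lim_{t\dn 0}\frac{P_\Th(b+th)}{t},
\]
and the critical cone \eqref{cc} specialises to $\mathcal{K}(0,b)=\Th\cap\{b\}^\perp$, which I abbreviate by $K$.

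The key reformulation uses cone scaling. Because $\Th$ is a cone, substituting $z=tv$ in the minimization defining $P_\Th(b+th)$ and dropping constants yields, after an elementary calculation,
\[
\frac{P_\Th(b+th)}{t}=v_t:=\argmin_{v\in\Th}\Bigl\{\frac{t}{2}\|v-h\|^2-\langle v,b\rangle\Bigr\}.
\]
Since $b\in\Th^\circ$, the linear term $-\langle v,b\rangle$ is nonnegative on $\Th$ and vanishes precisely on $K$. The reformulated problem thus has a two-scale structure: for small $t$ the dominant penalty $-\langle v,b\rangle$ drives $v_t$ toward $K$, while the vanishing quadratic $\tfrac{t}{2}\|v-h\|^2$ selects $P_K(h)$ among feasible points of $K$.

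The proof then proceeds by a compactness and uniqueness argument against the test point $P_K(h)$, for which $-\langle P_K(h),b\rangle=0$. Optimality of $v_t$ gives
\[
\frac{t}{2}\|v_t-h\|^2+\bigl(-\langle v_t,b\rangle\bigr)\le\frac{t}{2}\|P_K(h)-h\|^2,
\]
and both summands on the left are nonnegative, so I read off simultaneously (A) $\|v_t-h\|\le\|P_K(h)-h\|$ and (B) $0\le-\langle v_t,b\rangle\le\tfrac{t}{2}\|P_K(h)-h\|^2\to 0$. Bound (A) makes $(v_t)$ bounded; for any cluster point $v_*$ as $t\dn 0$, closedness of $\Th$ together with (B) forces $v_*\in\Th$ with $\langle v_*,b\rangle=0$, hence $v_*\in K$, while passing to the limit in (A) yields $\|v_*-h\|\le\|P_K(h)-h\|$. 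Since $P_K(h)$ is the unique nearest point of $K$ to $h$, I conclude $v_*=P_K(h)$. Uniqueness of the cluster point combined with boundedness of $(v_t)$ then gives full convergence $v_t\to P_K(h)$, establishing both the directional differentiability at $b$ and the PDC identity $P'_\Th(b;h)=P_K(h)$.

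The main obstacle is organizing the two-scale argument so that a single inequality yields simultaneously the limit feasibility $v_*\in K$ and the minimality $v_*=P_K(h)$. This works cleanly because $-\langle v,b\rangle$ and $\|v-h\|^2$ are independently nonnegative on $\Th$, so no delicate cross term appears. Notably, no convexity of $\Gamma$, no polyhedricity, and no second-order regularity of $\Th$ enters the argument; only the cone homogeneity at the vertex is used.
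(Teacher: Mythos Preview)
Your proof is correct and takes a genuinely different route from the paper.

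The paper argues indirectly: it invokes Proposition~\ref{poly-pdc} (extended polyhedricity implies PDC) and verifies extended polyhedricity at the vertex by noting that $T_\Theta(0)=\Theta$ and $T^2_\Theta(0,h)=T_\Theta(h)\ni 0$ for every $h\in\Theta$, so that $\mathcal{K}^2(0,b)=\mathcal{K}(0,b)$. This is short once Proposition~\ref{poly-pdc} is available, but that proposition in turn relies on cone reducibility of $\Theta$ at $\bar z$ and on the formula~\eqref{eq:derivative-projection} from \cite{BCS98}; in particular, the directional differentiability of $P_\Theta$ is imported from outside.

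Your argument is direct and more elementary. Exploiting the homogeneity $t\Theta=\Theta$, you rescale the projection problem to obtain $P_\Theta(b+th)/t=\argmin_{v\in\Theta}\{\tfrac{t}{2}\|v-h\|^2-\langle v,b\rangle\}$, and then a single comparison with the test point $P_K(h)$ yields both boundedness and the asymptotic constraint $\langle v_t,b\rangle\to 0$, from which the limit $v_t\to P_K(h)$ follows by uniqueness of the nearest point in the closed convex cone $K=\Theta\cap\{b\}^\perp$. The payoff is that you establish the existence of the directional derivative and its value simultaneously, without appealing to second-order tangent sets, extended polyhedricity, cone reducibility, or second-order regularity. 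In this sense your proof is strictly more self-contained than the paper's, and it shows that Proposition~\ref{Lemma:ext-polyhedricity-vertex} is really a first-order fact about cones, independent of the second-order machinery developed in Section~4.
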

\begin{proof}
By Proposition~\ref{poly-pdc} it suffices to check that $\Th$ satisfies the extended polyhedrality condition at $\oz=0$. Observe to this end that $T_{\Theta}(0)=\Theta$ and that $\Th$ clearly contains the critical cone ${\cal K}(0,b)$ for any $b\in N_{\Theta}(0)=\Theta^*$. Thus it remains to note that for any $h\in\Theta$ we have $T^{2}_{\Theta}(0,h)=T_{\Theta}(h)$. This yields that $0\in T^{2}_{\Theta}(0,h)$, and so the condition ${\cal K}^2(0,h)={\cal K}(0,h)$ holds for any $h\in\Theta$.
\end{proof}

\section{Calculating Graphical Derivatives}\sce

First we present the following result on calculating the graphical derivative of the projection $P_\Gamma$, which is of its own interest while being used in establishing the main results given below.

\begin{Lemma}[\bf graphical derivative of projections to conic constraints]\label{prop3} Let $\bar{y}\in P_{\Gamma}(\bar{u})$, and let $\bar{\nu}$ be the corresponding unique multiplier satisfying \eqref{eq-99}. Then
\begin{eqnarray}\label{eq-200}
\begin{array}{ll}
DP_{\Gamma}(\bar{u},\bar{y})(h)=&\big\{v_{1}\in\mathbb{R}^{m}\big|\;\exists~v_{2}\in\mathbb{R}^{l}\;\mbox{ such that }\\
&h=\left(I+\sum\limits^{l}_{i=1}\bar{\lambda}_{i}\nabla^{2}g_{i}(\bar{y})\right)v_{1}+\big(\nabla g(\bar{y})\big)^{T}v_{2},\\
&0=\nabla g(\bar{y})v_{1}-P^{\prime}_{\Theta}\big(g(\bar{y})+\bar{\nu};\nabla g(\bar{y})v_{1}+v_{2}\big)\big\}
\end{array}
\end{eqnarray}
under the standing assumptions made. Assuming in addition that PDC is satisfied at $\oz:=g(\bar{y})$ and denoting $\Bar{\cal K}:=\mathcal{K}(g(\bar{y}),\bar{\nu})$, we have
\begin{eqnarray}\label{eq-201}
DP_{\Gamma}(\bar{u},\bar{y})(h)=\left\{v\left|\;h\in\left(I+\sum\limits^{l}_{i=1}\bar{\nu}_{i}\nabla^{2}g_{i}(\bar{y})\right)v+\big(\nabla g(\bar{y})\big)^{T}N_{\Bar{\cal K}}\big(\nabla g(\bar{y})v\big)\right.\right\}.
\end{eqnarray}
\end{Lemma}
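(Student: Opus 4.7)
The plan for \eqref{eq-200} is a direct reduction to Theorem~\ref{prop2}. Under the standing assumptions, $\Gamma$ is prox-regular at $\bar{y}$, so by Proposition~\ref{thm1} the projection $P_\Gamma$ is single-valued and locally Lipschitzian near $\bar{u}$, and Theorem~\ref{prop2} shows that $P_\Gamma$ is directionally differentiable at $\bar{u}$ with $P'_\Gamma(\bar{u};h)$ equal to the first component of the unique solution $(v_1,v_2)$ of \eqref{eq-105} specialized at $(y,\nu)=(\bar{y},\bar{\nu})$. I would then invoke the elementary fact that for any single-valued, Lipschitz continuous, directionally differentiable mapping $F$, the graphical derivative is the singleton $DF(\bar{u},F(\bar{u}))(h)=\{F'(\bar{u};h)\}$: the Lipschitz property forces every limit $v$ of quotients $(F(\bar{u}+t_k h_k)-F(\bar{u}))/t_k$ with $h_k\to h$ and $t_k\downarrow 0$ to coincide with $F'(\bar{u};h)$, while the reverse inclusion is immediate by taking $h_k\equiv h$. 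Since the system \eqref{eq-105} has a unique solution at $(\bar{y},\bar{\nu})$, the right-hand side of \eqref{eq-200} (collecting those $v_1$ for which \emph{some} $v_2$ completes a solution) reduces to that singleton, which yields \eqref{eq-200}.

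For \eqref{eq-201}, the strategy is to rewrite \eqref{eq-200} using the PDC. I would first check that $\bar{\nu}\in N_\Theta(g(\bar{y}))$: the KKT relation \eqref{eq-101} with $s=0$ gives $g(\bar{y})\in N_{\Theta^*}(\bar{\nu})$, and the biconjugacy $\Theta^{**}=\Theta$ for the closed convex cone $\Theta$ yields the cone-duality equivalence $g(\bar{y})\in N_{\Theta^*}(\bar{\nu})\Longleftrightarrow\bar{\nu}\in N_\Theta(g(\bar{y}))$, together with the complementarity $\langle\bar{\nu},g(\bar{y})\rangle=0$. Definition~\ref{pdc} is then applicable with $\bar{z}=g(\bar{y})$ and $b=\bar{\nu}$, providing $P'_\Theta(g(\bar{y})+\bar{\nu};w)=P_{\bar{\mathcal{K}}}(w)$ for every $w\in\mathbb{R}^l$. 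Substituting $w=\nabla g(\bar{y})v_1+v_2$ converts the second equation of \eqref{eq-200} into the projection identity $\nabla g(\bar{y})v_1=P_{\bar{\mathcal{K}}}\bigl(\nabla g(\bar{y})v_1+v_2\bigr)$.

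The final step relies on the classical equivalence $P_C(a+b)=a\Longleftrightarrow a\in C\text{ and }b\in N_C(a)$, valid for any closed convex set $C$. Applied with $C=\bar{\mathcal{K}}$, $a=\nabla g(\bar{y})v_1$, and $b=v_2$, it converts the above projection identity into the normal-cone inclusion $v_2\in N_{\bar{\mathcal{K}}}(\nabla g(\bar{y})v_1)$. Inserting this description of $v_2$ into the first equation of \eqref{eq-200} and eliminating $v_2$ then yields $h\in\bigl(I+\sum_{i=1}^l\bar{\nu}_i\nabla^2 g_i(\bar{y})\bigr)v_1+\bigl(\nabla g(\bar{y})\bigr)^T N_{\bar{\mathcal{K}}}\bigl(\nabla g(\bar{y})v_1\bigr)$, which is exactly \eqref{eq-201}. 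The only potential friction point in the whole argument is the identification $\bar{\nu}\in N_\Theta(g(\bar{y}))$ required to invoke the PDC; once that is secured by cone duality, everything reduces to a chain of equivalences and no further technicalities are expected.
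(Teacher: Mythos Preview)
Your proposal is correct and follows essentially the same route as the paper: derive \eqref{eq-200} from Theorem~\ref{prop2} by identifying the graphical derivative of a single-valued Lipschitz, directionally differentiable map with its directional derivative, and then obtain \eqref{eq-201} by rewriting the second equation of \eqref{eq-200} via PDC and the projection/normal-cone equivalence for the convex cone $\bar{\mathcal K}$. The extra justifications you supply (the Lipschitz argument for $DP_\Gamma=P'_\Gamma$ and the cone-duality check $\bar\nu\in N_\Theta(g(\bar y))$ needed to invoke Definition~\ref{pdc}) are details the paper leaves implicit, but the strategy is identical.
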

\proof Formula \eqref{eq-200} follows from Theorem~\ref{prop2} since the graphical derivate reduces to the directional one when the latter exists.
To verify \eqref{eq-201} under the imposed PDC assumption, observe that the second relationship on the right-hand side of (\ref{eq-200}) amounts in this case to saying that
\begin{eqnarray}\label{eq-202}
P_{\Bar{\cal K}}\big(\nabla g(\bar{y})v_{1}+v_{2}\big)=\nabla g(\bar{y})v_{1}.
\end{eqnarray}
Since the cone $\Bar{\cal K}$ is surely convex due to the convexity of $\Theta$, equality (\ref{eq-202}) is equivalent to the inclusion $v_{2}\in N_{\Bar{\cal K}}(\nabla g(\bar{y})v_{1})$. In this way we arrive at \eqref{eq-201}.
\endproof

The next major result provides a precise second-order formula for calculating the graphical derivative of the regular normal cone mapping \eqref{eq-1} to the conic constraint set $\Gamma$ from \eqref{eq-2}.

\begin{Theorem}[\bf graphical derivative of the normal cone mapping]\label{thm2} Let $\bar{w}\in\widehat{N}_{\Gamma}(\bar{y})$, and let $\bar{\nu}$ be the unique multiplier satisfying the KKT system in \eqref{eq-99} with $\bar{w}=\bar{u}-\bar{y}$. In addition to the standing assumptions made, suppose that the cone $\Th$ in \eqref{eq-2} satisfies PDC at $\oz=g(\oy)$. Then for any $v\in\mathbb{R}^{m}$ we have the representation
\begin{eqnarray}\label{eq-203}
D\widehat{N}_{\Gamma}(\bar{y},\bar{w})(v)=\left(\sum\limits^{l}_{i=1}\bar{\nu}_{i}\nabla^{2}g_{i}(\bar{y})\right)v+\nabla g(\bar{y})^{T}N_{\Bar{\cal K}}\big(\nabla g(\bar{y})v\big),
\end{eqnarray}
where the cone $\Bar{\cal K}$ is defined in Lemma~{\rm\ref{prop3}}.
\end{Theorem}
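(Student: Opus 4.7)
The plan is to deduce Theorem~\ref{thm2} from the graphical-derivative formula \eqref{eq-201} for $P_\Gamma$ established in Lemma~\ref{prop3}, via the canonical affine bijection between $\gph\Hat N_\Gamma$ and $\gph P_\Gamma$ furnished by prox-regularity. Set $\ou:=\oy+\ow$, so that the KKT system \eqref{eq-101} with $s=0$ is satisfied by $(\ou,\oy,\bar\nu)$ and $(\ou,\oy)\in\gph P_\Gamma$. Introduce the linear isomorphism $\phi\colon\R^m\times\R^m\to\R^m\times\R^m$ defined by $\phi(y,w):=(y+w,y)$.

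The first step is to verify that $\phi$ induces a local bijection between $\gph\Hat N_\Gamma$ near $(\oy,\ow)$ and $\gph P_\Gamma$ near $(\ou,\oy)$. Under nondegeneracy {\bf (A2)} the set $\Gamma$ is strongly amenable, hence prox-regular at $\oy$ (see \cite[Proposition~13.32]{RW}), which yields $\Hat N_\Gamma(y)=N_\Gamma(y)$ for all $y\in\Gamma$ near $\oy$. Proposition~\ref{thm1} then gives $P_\Gamma=(I+N^r_\Gamma)^{-1}$ on a neighborhood of $\oy$ for some $r>0$; by enlarging $r$ to cover $\ow$ and shrinking the neighborhoods, the equivalence $w\in\Hat N_\Gamma(y)\Longleftrightarrow y=P_\Gamma(y+w)$ holds locally, which is precisely the statement that $\phi$ sends a neighborhood of $(\oy,\ow)$ in $\gph\Hat N_\Gamma$ bijectively onto a neighborhood of $(\ou,\oy)$ in $\gph P_\Gamma$.

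Linearity of $\phi$ transports contingent cones: $(v,h)\in T_{\gph\Hat N_\Gamma}(\oy,\ow)$ if and only if $(v+h,v)\in T_{\gph P_\Gamma}(\ou,\oy)$, i.e.,
\[
h\in D\Hat N_\Gamma(\oy,\ow)(v)\;\Longleftrightarrow\;v\in DP_\Gamma(\ou,\oy)(v+h).
\]
Applying Lemma~\ref{prop3}, whose hypotheses (the standing assumptions together with PDC at $\oz=g(\oy)$) are exactly those imposed in Theorem~\ref{thm2}, formula \eqref{eq-201} with input $v+h$ gives
\[
v+h\in\Bigl(I+\sum_{i=1}^{l}\bar\nu_i\nabla^2 g_i(\oy)\Bigr)v+(\nabla g(\oy))^T N_{\Bar{\cal K}}\bigl(\nabla g(\oy)v\bigr),
\]
and cancelling $v$ on both sides produces exactly formula \eqref{eq-203}.

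The principal technical obstacle I anticipate lies in the local graph identification: one must choose the truncation radius $r$ in Proposition~\ref{thm1} large enough to include $\ow$ while keeping $P_\Gamma=(I+N^r_\Gamma)^{-1}$ valid on a full neighborhood of $\oy$, and one must rule out spurious elements of $\gph\Hat N_\Gamma$ that do not lie in $\phi^{-1}(\gph P_\Gamma)$ near $(\oy,\ow)$. Once this identification is secured, the transport of tangent cones through $\phi$ and the one-line algebraic simplification producing \eqref{eq-203} are entirely routine.
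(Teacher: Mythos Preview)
Your overall architecture---identify $\gph\Hat N_\Gamma$ locally with $\gph P_\Gamma$ via $\phi(y,w)=(y+w,y)$, transport tangent cones, and invoke \eqref{eq-201}---matches the paper's. But the obstacle you flag is genuine and your proposed patch does not remove it. You cannot simply ``enlarge $r$ to cover $\ow$'': Proposition~\ref{thm1} supplies $P_\Gamma=(I+N^r_\Gamma)^{-1}$ on a neighborhood of $\oy$ only for \emph{some} $r>0$ tied to the prox-regularity modulus, and for nonconvex $\Gamma$ the projection $P_\Gamma$ need not even be single-valued at $\oy+\ow$ when $\|\ow\|$ is large. Equally problematic, Lemma~\ref{prop3} rests on Theorem~\ref{prop2}, which expressly sets $\ou=\oy$ (precisely so that Proposition~\ref{prop1}(ii) applies without the second-order condition \eqref{eq-102}) and delivers \eqref{eq-105}---hence \eqref{eq-201}---only for $u$ in a neighborhood of $\oy$. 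Applying \eqref{eq-201} at $(\ou,\oy)=(\oy+\ow,\oy)$ is therefore justified only when $\ow$ is small, and nothing in your argument forces that.

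The paper's remedy exploits that $\Hat N_\Gamma$ is cone-valued. Choose $\vartheta>0$ with $\vartheta\ow$ small enough that both the graph identification and Lemma~\ref{prop3} apply at $(\oy,\vartheta\ow)$; run your argument there to obtain the analogue of \eqref{eq-203} with the multiplier $\nu$ associated to $\vartheta\ow$; then invoke the elementary scaling identity
\[
(h,s)\in T_{\gph\Xi}(a,b)\Longleftrightarrow(h,\vartheta s)\in T_{\gph\Xi}(a,\vartheta b),
\]
valid for any cone-valued mapping $\Xi$ (cf.\ \cite[Lemma~1(i)]{Kr}), together with $\bar\nu=\nu/\vartheta$, to transfer the formula from $D\Hat N_\Gamma(\oy,\vartheta\ow)$ to $D\Hat N_\Gamma(\oy,\ow)$. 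The critical cone $\Bar{\cal K}$ is unchanged under this rescaling since $\{\nu\}^\perp=\{\bar\nu\}^\perp$. This scaling step is the one missing idea in your proposal.
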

\proof As mentioned above, the conic constraint set $\Gamma$ is prox-regular at $\bar{y}$. Thus $N_{\Gamma}(y)=\widehat{N}_{\Gamma}(y)$ for all $y\in\Gamma$ sufficiently close to $\oy$ and the result of Proposition~\ref{thm1} can be applied. It follows therefore that there exists a neighborhood $\mathcal{U}$ of $\bar{y}$ such that for all $u\in\mathcal{U}$ we have the equivalence
\[
y=P_{\Gamma}(u)\Longleftrightarrow u\in y+N_{\Gamma}(y).
\]
Consider now a neighborhood $\mathcal{V}$ of $\bar{y}$ and an $\eps > 0$ such that $y+w\in \mathcal{U}$ for all $y\in\mathcal{V}$ and $w\in\eps\B$. It follows furthermore that
\begin{eqnarray}\label{eq-204}
w\in\widehat{N}_{\Gamma}(y)\;\mbox{ if and only if }\;\left[\begin{array}{ll}
y+w\\
y
\end{array}\right]\in\gph P_{\Gamma}
\end{eqnarray}
provided that $y\in\mathcal{V}$ and $w\in\eps\B$. Given $\bar{w}\in \hat{N}_\Gamma(\bar{y})$, we can find a positive number $\vartheta$ such that $\vartheta\bar{w}\in \frac{\eps}{2}\B$, and so the equivalence (\ref{eq-204}) holds for all $y\in\Gamma$ close to $\bar{y}$ and $w$ close to $\vartheta\bar{w}$. Combining (\ref{eq-204}) with formula (\ref{eq-201}) from Lemma~\ref{prop3} and the chain rule from \cite[Exercise 6.7]{RW} yields
\[
D\widehat{N}_{\Gamma}(\bar{y},\vartheta\bar{w})(v)=\left(\sum\limits^{l}_{i=1}\nu_{i}\nabla^{2}g_{i}(\bar{y})\right)v+\nabla g(\bar y)^{T}N_{\Bar{\cal K}}\big(\nabla g(\bar{y})v\big),
\]
where $\nu\in\R^l$ is the unique multiplier satisfying the conditions
\[
\vartheta\bar{w}=\nabla g(\bar{y})^{T}\nu,\quad\nu\in N_{\Theta}\big(g(\bar{y})\big).
\]
It remains to denote $\bar{\nu}:=\frac{\nu}{\vartheta}$ and recall the easily verifiable equivalence (see \cite[Lemma~1(i)]{Kr})
\begin{eqnarray*}
(h,s)\in T_{{\rm gph}\,\Xi}(a, b)\Longleftrightarrow(h,\vartheta s)\in T_{{\rm gph}\,\Xi}(a,\vartheta b),\quad\vartheta>0,
\end{eqnarray*}
which holds for any cone-valued mapping $\Xi\colon\R^n\tto\R^n$ with $(a,b)\in\gph\Xi$. This together with definition \eqref{der} of the graphical derivative gives us \eqref{eq-203} and completes the proof of the theorem.
\endproof

Now we are ready to present the final result of this section giving us an upper estimate of the graphical derivative of the solution map $S$ in \eqref{eq:solution-map} under the assumptions above and then a precise representation under an additional surjectivity assumption. It is convenient to formulate this result via the Lagrangian function associated with GE \eqref{eq-1} by
\begin{eqnarray*}
{\cal L}(x,y,\lambda):=f(x,y)+\nabla g(y)^{T}\lambda,\quad\lm\in\R^l.
\end{eqnarray*}
\begin{Theorem}[\bf graphical derivative of the solution map]\label{thm3}
Let $(\bar x,\bar y)\in\gph S$, $\bar w:=-f(\bar x,\bar y)$, and $\bar\lambda\in N_{\Theta}(g(\bar y))$ be the unique Lagrange multiplier satisfying the equation
$$
{\cal L}(\bar x,\bar y,\bar\lambda)=0.
$$
Suppose that all the assumptions of Theorem~{\rm\ref{thm2}} are fulfilled and that $f$ is a smooth vector function around $(\ox,\oy)\in\gph S$. Then for any $u\in\rr^{n}$ we have the inclusion
\begin{equation}\label{eq:inclusion-solution-map}
DS(\bar x,\bar y)(u)\subset\big\{v\in\rr^{m}\big|\;0\in\nabla_{x}f(\bar x,\bar y)u+\nabla_{y}{\cal L}(\bar x,\bar y,\bar\lambda)v+\big(\nabla g(\bar y)\big)^{T}N_{\Hat{\cal K}}\big(\nabla g(\bar y)v\big)\big\}
\end{equation}
with the notation $\Hat{\cal K}:={\cal K}(g(\bar y),\bar \lambda)$. Furthermore, inclusion \eqref{eq:inclusion-solution-map} becomes an equality provided that partial Jacobian $\nabla_{x}f(\bar x,\bar y)$ is surjective.
\end{Theorem}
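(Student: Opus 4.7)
The plan is to reformulate the solution map as an inverse image under a smooth mapping and combine the preimage calculus for tangent cones with Theorem~\ref{thm2}. Specifically, I introduce the $\mathcal{C}^{1}$ map $H\colon\R^{n}\times\R^{m}\to\R^{m}\times\R^{m}$ by $H(x,y):=(y,-f(x,y))$ and observe that $(x,y)\in\gph S$ if and only if $-f(x,y)\in\widehat{N}_{\Gamma}(y)$, equivalently $(y,-f(x,y))\in\gph\widehat{N}_{\Gamma}$. Hence $\gph S=H^{-1}(\gph\widehat{N}_{\Gamma})$, and the Jacobian $\nabla H(\bar x,\bar y)$ acts by $(u,v)\mapsto(v,-\nabla_{x}f(\bar x,\bar y)u-\nabla_{y}f(\bar x,\bar y)v)$. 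This reframing is what allows Theorem~\ref{thm2}, which computes $D\widehat{N}_{\Gamma}$, to be imported into a statement about $DS$.

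For the inclusion \eqref{eq:inclusion-solution-map}, I invoke the standard tangent cone inclusion for smooth preimages (see, e.g., \cite[Theorem~6.31]{RW}):
\[
T_{\gph S}(\bar x,\bar y)\subset\bigl(\nabla H(\bar x,\bar y)\bigr)^{-1}\bigl(T_{\gph\widehat{N}_{\Gamma}}(\bar y,\bar w)\bigr).
\]
Translating this via the definition of the graphical derivative, any $(u,v)\in T_{\gph S}(\bar x,\bar y)$ must satisfy
\[
-\nabla_{x}f(\bar x,\bar y)u-\nabla_{y}f(\bar x,\bar y)v\in D\widehat{N}_{\Gamma}(\bar y,\bar w)(v).
\]
Plugging in the formula from Theorem~\ref{thm2} and regrouping the $\nabla_{y}f$ term with $\sum_{i=1}^{l}\bar\lambda_{i}\nabla^{2}g_{i}(\bar y)$ to form $\nabla_{y}{\cal L}(\bar x,\bar y,\bar\lambda)$ yields precisely \eqref{eq:inclusion-solution-map}.

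For the reverse inclusion under surjectivity of $\nabla_{x}f(\bar x,\bar y)$, the first observation is that the partial surjectivity makes $\nabla H(\bar x,\bar y)$ itself surjective onto $\R^{m}\times\R^{m}$: given any $(a,b)$, set $v=a$ and solve $\nabla_{x}f(\bar x,\bar y)u=-b-\nabla_{y}f(\bar x,\bar y)a$. Starting from $v$ satisfying the right-hand side of \eqref{eq:inclusion-solution-map} for some $u$, the equality direction in Theorem~\ref{thm2} produces sequences $t_{k}\downarrow 0$ and $(a_{k},b_{k})\to(v,-\nabla_{x}f(\bar x,\bar y)u-\nabla_{y}f(\bar x,\bar y)v)$ with $(\bar y+t_{k}a_{k},\bar w+t_{k}b_{k})\in\gph\widehat{N}_{\Gamma}$. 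A Lyusternik--Graves type perturbation argument, based on the surjectivity of $\nabla_{x}f(\bar x,\bar y)$, then solves the equation $f(\bar x+t_{k}u_{k},\bar y+t_{k}a_{k})=-\bar w-t_{k}b_{k}$ for a sequence $u_{k}\to u$; the pair $(\bar x+t_{k}u_{k},\bar y+t_{k}a_{k})$ thus lies in $\gph S$, and passing to the limit gives $(u,v)\in T_{\gph S}(\bar x,\bar y)$.

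The main obstacle is precisely this last step: the contingent tangent cone is not automatically preserved by preimages, and producing the witnessing sequence $u_{k}$ requires either a direct Lyusternik-type perturbation or, equivalently, metric regularity of $H$ at $(\bar x,\bar y)$. Both rely on the surjectivity of $\nabla_{x}f(\bar x,\bar y)$, and it is this constraint qualification that is doing the substantive work. The remainder of the proof is a routine combination of the preimage tangent chain rule with Theorem~\ref{thm2} and the algebraic rewriting of the KKT block in terms of the Lagrangian.
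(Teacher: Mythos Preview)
Your proof is correct and follows essentially the same route as the paper: represent $\gph S$ as the preimage $H^{-1}(\gph\widehat{N}_{\Gamma})$ with $H(x,y)=(y,-f(x,y))$, apply the tangent-cone preimage rule \cite[Theorem~6.31]{RW} for the inclusion, and then invoke Theorem~\ref{thm2}. For the equality under surjectivity of $\nabla_{x}f(\bar x,\bar y)$ the paper simply observes that this makes $\nabla H(\bar x,\bar y)$ surjective and cites \cite[Exercise~6.7]{RW} directly, whereas you spell out the underlying Lyusternik--Graves perturbation argument; the content is the same.
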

\proof
We obviously have the representation
$$
\gph S=\big\{(x,y)\in\R^n\times\R^m\big|\;\big(y,-f(x,y)\big)\in\gph\Hat N_{\Gamma}\big\}.
$$
It follows from the tangent cone calculus rule of\cite[Theorem~6.31]{RW} that we have the inclusion
\begin{eqnarray}\label{eq:tangent-cone}
T_{{\rm gph}\,S}(\bar x,\bar y)\subset\big\{(u,v)\in\R^n\times\R^m\big|\;-\nabla_{x}f(\bar x,\bar y)u-\nabla_{y}f(\bar x,\bar y)v\in D\Hat N_\Gamma\big(\bar y,f(\bar x,\bar y)\big)(v)\big\}.
\end{eqnarray}
Moreover, \eqref{eq:tangent-cone} holds as equality provided that the matrix $\nabla_{x}f(\bar x,\bar y) $ is surjective; see, e.g., \cite[Exercise~6.7]{RW}. Combining inclusion \eqref{eq:tangent-cone} with Theorem~\ref{thm2} gives us the upper estimate \eqref{eq:inclusion-solution-map} while the additional surjectivity assumption ensures the equality therein and thus completes the proof.
\endproof

\section{Application to Isolated Calmness}\sce

In this section we develop an application of the graphical derivative evaluations obtained in Theorem~\ref{thm3} to derive sufficient as well as necessary and sufficient conditions for the so-called isolated calmness of solution map $S$ from \eqref{eq:solution-map}, which is a useful local Lipschitzian stability property recognized in variational analysis and optimization; see, e.g., \cite{DR} and the references therein.

\begin{Definition}[\bf isolated calmness]\label{il-def} We say that a set-valued mapping $F:\R^{d}\rightrightarrows\mathbb{R}^{s}$ has the {\sc isolated calmness property} at $(\bar x,\bar y)\in\gph F$ if there exist neighborhoods $U$ of $\bar x$ and $V$ of $\bar y$ as well as a positive constant $\ell>0$ such that
\begin{eqnarray}\label{calm}
F(x)\cap V\subset\{\bar y\}+\ell\|x-\bar x\|\B\;\mbox{ for all }\;x\in U.
\end{eqnarray}
\end{Definition}

This property can be viewed as a local single-valued restriction at the nominal point $\oy$ of the {\em calmness} notion for set-valued mappings \cite{RW}, which in turn is an image localization of Robinson's {\em upper Lipschitz} property introduced in \cite{rob} for stability analysis of generalized equations. Note that the isolated calmness is called ``local upper Lipschitz" property in \cite{l}. Furthermore, it is easy to show (see, e.g., \cite[Theorem~3I.2]{DR}) that property \eqref{calm} for $F$ is equivalent with the so-called ``strong metric subregularity" of the inverse mapping $F^{-1}$. It is worth mentioning that the latter property has been recently applied  in \cite{dmn} to the study of {\em tilt stability} in optimization, which is a particular case of full stability used in Section~3 to derive formulas for the directional derivatives of projections to nonconvex conic constraints. As we have seen, these formulas have been much employed in deriving the main graphical derivative results in Section~5. Some characterizations of the isolated calmness and strong metric subregularity properties for parametric variational inequalities over polyhedral convex sets can be found in \cite{DR} and \cite{Kr}.

Our application of the obtained graphical derivative calculations to isolated calmness is based on the following graphical derivative characterization of this property for general multifunctions between finite-dimensional spaces whose necessity part was obtained in \cite[Proposition~2.1]{kr} while sufficiency was proved later in \cite[Proposition~4.1]{l}.

\begin{Lemma}[\bf graphical derivative criterion for isolated calmness]\label{ic-char} Let $F\colon\R^d\tto\R^s$, and let $(\ox,\oy)\in\gph F$. Then $F$ has the isolated calmness property at $(\ox,\oy)$ if and only if $DF(\ox,\oy)(0)=\{0\}$.
\end{Lemma}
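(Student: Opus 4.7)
The plan is to prove the two implications separately, both working directly from the sequential definition \eqref{tan} of the tangent cone together with definition \eqref{der} of the graphical derivative. For the necessity direction, suppose isolated calmness holds with constant $\ell$ and neighborhoods $U,V$, and pick an arbitrary $v\in DF(\ox,\oy)(0)$. Unfolding the tangent cone definition, there exist $t_k\dn 0$ and $(u_k,v_k)\to(0,v)$ with $(\ox+t_ku_k,\oy+t_kv_k)\in\gph F$. For all large $k$ we have $\ox+t_ku_k\in U$ and $\oy+t_kv_k\in V$, so \eqref{calm} yields the estimate $t_k\|v_k\|\le\ell\, t_k\|u_k\|$. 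Dividing by $t_k$ and letting $k\to\infty$ forces $\|v\|\le\ell\cdot 0=0$, hence $v=0$, as required.

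For the sufficiency direction, I would argue by contradiction. Assume $DF(\ox,\oy)(0)=\{0\}$ but isolated calmness fails. Then for each $k\in\N$ there exist $x_k\to\ox$ and $y_k\in F(x_k)\cap\B(\oy;1/k)$ such that $\|y_k-\oy\|>k\|x_k-\ox\|$. In particular $y_k\ne\oy$, so one can set
$$
t_k:=\|y_k-\oy\|\dn 0,\qquad u_k:=(x_k-\ox)/t_k,\qquad v_k:=(y_k-\oy)/t_k,
$$
which gives $\|v_k\|=1$ and $\|u_k\|<1/k\to 0$. By compactness of the unit sphere, extract a subsequence along which $v_k\to v$ with $\|v\|=1$. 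Since $(\ox+t_ku_k,\oy+t_kv_k)=(x_k,y_k)\in\gph F$ and $(u_k,v_k)\to(0,v)$, we conclude $(0,v)\in T_{\gph F}(\ox,\oy)$, i.e., $v\in DF(\ox,\oy)(0)$. But $v\ne 0$ contradicts the hypothesis, completing the proof.

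There is no substantive obstacle in this argument: both directions reduce to a one-line rescaling combined with the sequential definitions. The only delicate choice is taking $t_k:=\|y_k-\oy\|$ in the sufficiency part; this normalizes the $y$-increment to unit length while the assumed violation of the calmness bound automatically drives the $x$-increment $\|u_k\|$ to zero. That is precisely the mechanism converting a failure of \eqref{calm} into a nonzero element of $DF(\ox,\oy)(0)$, and hence the crux of the equivalence.
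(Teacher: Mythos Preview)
Your proof is correct. Both directions are handled cleanly: the necessity argument correctly reads off the bound $\|v_k\|\le\ell\|u_k\|$ from \eqref{calm} and passes to the limit, and the sufficiency argument produces a nonzero tangent direction by the standard normalization $t_k:=\|y_k-\oy\|$. The only place one might quibble is the phrasing ``there exist $x_k\to\ox$'' in the contradiction setup; but as you implicitly rely on, the inequality $\|y_k-\oy\|>k\|x_k-\ox\|$ together with $y_k\in\B(\oy;1/k)$ already forces $\|x_k-\ox\|<1/k^2$, so $x_k\to\ox$ is automatic and no extra care is needed.

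Note, however, that the paper does not actually prove this lemma: it simply attributes the necessity part to King and Rockafellar \cite[Proposition~2.1]{kr} and the sufficiency part to Levy \cite[Proposition~4.1]{l}. Your argument is therefore not a reproduction of the paper's approach but a self-contained elementary proof from the sequential definitions \eqref{tan} and \eqref{der}. This buys independence from the cited literature and makes the exposition more self-sufficient; the trade-off is only a few extra lines, since---as you observe---the argument is essentially a one-step rescaling in each direction.
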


The last result of this paper incorporates the graphical derivative evaluation for the solution map \eqref{eq:solution-map} into the isolated calmness criterion of Lemma~\ref{ic-char}. In this way we arrive at efficient conditions for the isolated calmness property of solutions to GE \eqref{eq-1} in terms of its initial data.

\begin{Theorem}[\bf isolated calmness for parameterized equilibria with conic constraints]\label{thm4} In the setting of Theorem~{\rm\ref{thm3}}, assume that the adjoint generalized equation
\begin{equation}\label{eq:GE-isolated-calm}
0\in\nabla_{y}{\cal L}(\bar x,\bar y,\bar\lambda)v+\big(\nabla g(\bar y)\big)^{\top} N_{\hat{\cal K}}\big(\nabla g(\bar y)v\big)
\end{equation}
has only the trivial solution $v=0$. Then the solution map $S$ from \eqref{eq:solution-map} has the isolated calmness property at $(\bar x, \bar y)$. If in addition the partial Jacobian $\nabla_{x} f(\bar x,\bar y)$ is surjective, then the above condition is also necessary for $S$ to have the isolated calmness property at $(\bar x,\bar y)$.
\end{Theorem}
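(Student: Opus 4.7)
The plan is to apply the graphical derivative criterion in Lemma~\ref{ic-char} directly on top of the upper estimate (and, in the surjective case, the exact formula) for $DS(\bar x,\bar y)$ obtained in Theorem~\ref{thm3}. That is, I will translate the hypothesis on the adjoint generalized equation~\eqref{eq:GE-isolated-calm} into the equality $DS(\bar x,\bar y)(0)=\{0\}$ and then invoke Lemma~\ref{ic-char}. Since the machinery has all been developed, almost no extra work should be needed beyond specializing the formulas at $u=0$.

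For the sufficiency part, I would set $u=0$ in the upper inclusion \eqref{eq:inclusion-solution-map} from Theorem~\ref{thm3}. This yields
\[
DS(\bar x,\bar y)(0)\subset\big\{v\in\R^{m}\big|\;0\in\nabla_{y}{\cal L}(\bar x,\bar y,\bar\lambda)v+\big(\nabla g(\bar y)\big)^{T}N_{\Hat{\cal K}}\big(\nabla g(\bar y)v\big)\big\}.
\]
By the standing assumption that \eqref{eq:GE-isolated-calm} admits only $v=0$, the right-hand side reduces to $\{0\}$. Since $0\in DS(\bar x,\bar y)(0)$ always (the zero tangent vector lies in every tangent cone at a graph point), we conclude $DS(\bar x,\bar y)(0)=\{0\}$. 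The isolated calmness of $S$ at $(\bar x,\bar y)$ then follows from Lemma~\ref{ic-char}.

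For the necessity part, I would rely on the second statement of Theorem~\ref{thm3}: when $\nabla_{x}f(\bar x,\bar y)$ is surjective, inclusion \eqref{eq:inclusion-solution-map} holds as equality, so again specializing to $u=0$ gives
\[
DS(\bar x,\bar y)(0)=\big\{v\in\R^{m}\big|\;0\in\nabla_{y}{\cal L}(\bar x,\bar y,\bar\lambda)v+\big(\nabla g(\bar y)\big)^{T}N_{\Hat{\cal K}}\big(\nabla g(\bar y)v\big)\big\}.
\]
If $S$ has isolated calmness at $(\bar x,\bar y)$, then Lemma~\ref{ic-char} yields $DS(\bar x,\bar y)(0)=\{0\}$, which forces the adjoint generalized equation~\eqref{eq:GE-isolated-calm} to have only the trivial solution $v=0$.

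The only potential subtlety, and the one I would want to double-check, is verifying that the hypotheses of Theorem~\ref{thm3} carry over verbatim, so that the estimate (and its equality under surjectivity) is available at $(\bar x,\bar y)$ with the specific multiplier $\bar\lambda\in N_{\Theta}(g(\bar y))$ arising from ${\cal L}(\bar x,\bar y,\bar\lambda)=0$; uniqueness of this multiplier follows from the nondegeneracy assumption {\bf(A2)}, and the PDC at $g(\bar y)$ is assumed, so the ingredients match. Beyond this bookkeeping, the argument is essentially a one-line application of Lemma~\ref{ic-char} and Theorem~\ref{thm3}, with no genuine obstacle expected.
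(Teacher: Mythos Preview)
Your proposal is correct and matches the paper's own proof essentially verbatim: the paper simply states that the result is a direct combination of Lemma~\ref{ic-char} and Theorem~\ref{thm3}. Your specialization of \eqref{eq:inclusion-solution-map} at $u=0$ and invocation of the graphical derivative criterion is exactly the intended argument.
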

\proof This is a direct combination of Lemma~\ref{ic-char} and Theorem~\ref{thm3}.
\endproof

Finally in this section, we illustrate the usage of Theorem~\ref{thm4} in the case of nonpolyhedral conic constraints in \eqref{eq-1} with $\Theta$ being the second-order cone \eqref{ic} in $\R^{3}$.

\begin{Example}[\bf isolated calmness for equilibrium problems with second-order cone constraints] {\rm Consider the generalized equation \eqref{eq-1} with $x,y\in\rr^{3}$, $f(x,y)=x$,
$$
\Theta:={Q}^3=\left\{(\th_1,\th_{2},\th_3)\in\R^3\big|\;\th_3\ge\|(\th_{1},\th_2)\|=\sqrt{\th_{1}^{2}+\th_{2}^{2} }\right\},
$$
and $\Gamma:=g^{-1}(\Theta)$ with $g(y):=(y_{1},y_{2},y_{3}+0.2(y_{1}^{2}+y_{2}^{2}))$, i.e.,
$$
\Gamma=\left\{(y_{1},y_{2},y_{3})\in\R^3\big|\;\big(y_{1},y_{2},y_{3}+0.2(y_{1}^{2}+y_{2}^{2})\big)\in {Q}^3\right\}.
$$
This set $\Gamma$ is clearly nonconvex.

Note that the corresponding generalized equation \eqref{eq-1} amounts to the stationary condition for the parametric optimization problem given by:
\[
\mbox{minimize }\;\langle x,y\rangle\quad\mbox{ subject to }\quad y\in\Gamma.
\]
Consider the pair $(\bar x,\bar y)$ with $\bar x=(-1,0,1)$ and $\bar y=(0,0,0)$, which belongs to the graph of the solution map $S$ of this GE.
Since $\nabla g(\bar y)=I$, it follows that $\bar \lambda=-\bar x$ and the vector $\bar y$ trivially satisfies the nondegeneracy condition {\bf(A2)}. As we pointed out in Section~2, ${Q}^3$ is cone reducible, and therefore its metric projection is directionally differentiable everywhere on $\R^3$. Thus our standing assumptions {\bf (A1)} and {\bf (A3)} are also satisfied in this setting. Furthermore, Proposition~\ref{Lemma:ext-polyhedricity-vertex} ensures that $\Theta=Q^3$ satisfies the PDC property at its vertex $\bar y=(0,0,0)$.

Let us now show that the solution map $S$ enjoys the isolated calmness property at $(\bar x,\bar y)$ via the verification of condition \eqref{eq:GE-isolated-calm} from Theorem~\ref{thm4}. Observe to this end that
$$
\nabla_{y}{\cal L}(\bar x,\bar y,\bar\lambda)v=\bar\lambda_{3}\nabla^{2}g_{3}(\bar y)v=\left(\begin{matrix}-0.4v_{1}\\-0.4v_{2}\\0\end{matrix}\right)\;\mbox{ and }\;\Hat{\cal K}={\cal K}(\bar y,\bar\lambda)={\cal K}^{3}\cap\bar\lambda^{\bot}=\R_{+}\left(\begin{matrix}1\\0\\1\end{matrix}\right).
$$
Since $N_{\hat{\cal K}}\big(\nabla g(\bar y)v\big)=\hat{\cal K}^*\cap v^\bot$ (due to $\nabla g(\bar y)=I$), condition \eqref{eq:GE-isolated-calm} amounts to the implication
$$
\left[\left(\begin{matrix}0.4v_{1}\\0.4v_{2}\\0\end{matrix}\right)\in\Hat{\cal K}^*,\;v\in\hat{\cal K},\;v_{1}^{2}+v_{2}^{2}=0\right]\Longrightarrow v=0.
$$
By the direct calculation we have $\Hat{\cal K}^*=\{a\in\R^3|\;a_{1}+a_{3}\le 0\}$, the so the above implication holds. This shows by Theorem~\ref{thm4} that the solution map $S$ in this example possesses the isolated calmness property at $(\bar x,\bar y)$. It is worth noting that $S$ does not have the (robust) Aubin/Lipschitz-like property around $(\bar x,\bar y)$ because its values are empty for all $x<0$.}
\end{Example}

\section{Concluding Remarks}\sce

This paper demonstrates that the recently developed techniques of second-order variational analysis and full stability in optimization allow us to derive calculus formulas for graphical derivatives of solution maps to parameterized equilibria with conic constraints in challenging cases of nonconvex constraint sets generated by nonpolyhedral cones. The new projection derivation condition plays a crucial role in obtaining verifiable results in this direction and their application to isolated calmness of solution maps. This condition is local and may be violated in many situations. In such cases we do not have for now an efficient technique for calculating graphical derivatives in our disposal. This could be an interesting goal for further research. On the other hand, the obtained formulas for calculating graphical derivatives of metric projections and solutions maps contain terms expressed via normals to the corresponding (convex) critical cone for the underlying set $\Th$ at the reference points. It would be appealing from both viewpoints of optimization/equilibrium theory and its applications to further evaluate these terms entirely via the initial data of remarkable constraint systems appearing in conic programming.\\[2ex]
{\bf Acknowledgements.} The authors are grateful to Fr\'ed\'eric Bonnans, Ebrahim Sarabi, Alex Shapiro, and Lionel Thibault for helpful discussions on various aspects of this paper.
\bigskip

\end{document}